\newtheorem{theorem}{Theorem}[section]
\newtheorem*{theorem*}{Theorem}
\newtheorem*{conjecture*}{Conjecture}
\newtheorem*{question*}{Question}
\newtheorem{lemma}[theorem]{Lemma}
\newtheorem*{lemma*}{Lemma}
\newtheorem{proposition}[theorem]{Proposition}
\newtheorem*{proposition*}{Proposition}
\newtheorem{corollary}[theorem]{Corollary}
\newtheorem*{corollary*}{Corollary}
\theoremstyle{definition}
\newtheorem{definition}[theorem]{Definition}
\newtheorem*{definition*}{Definition}
\newtheorem{remark}[theorem]{Remark}
\newtheorem*{example*}{Example}
\newtheorem*{examples*}{Examples}
\newcommand{\twomat}[4]{\begin{pmatrix} #1 & #2 \\ #3 & #4 \end{pmatrix}}
\renewcommand{\bar}{\overline}
\renewcommand{\AA}{\mathbb{A}}
\newcommand{\FF}{\mathbb{F}}
\newcommand{\PP}{\mathbb{P}}
\newcommand{\QQ}{\mathbb{Q}}
\newcommand{\ZZ}{\mathbb{Z}}
\newcommand{\Ac}{\mathcal{A}}
\newcommand{\Bc}{\mathcal{B}}
\newcommand{\Cc}{\mathcal{C}}
\newcommand{\Fc}{\mathcal{F}}
\newcommand{\Ic}{\mathcal{I}}
\newcommand{\Oc}{\mathcal{O}}
\newcommand{\Sc}{\mathcal{S}}
\newcommand{\Uc}{\mathcal{U}}
\newcommand{\Xc}{\mathcal{X}}
\newcommand{\Yc}{\mathcal{Y}}
\newcommand{\Zc}{\mathcal{Z}}
\newcommand{\Af}{\mathfrak{A}}
\newcommand{\af}{\mathfrak{a}}
\newcommand{\mf}{\mathfrak{m}}
\newcommand{\pf}{\mathfrak{p}}
\newcommand{\rarrow}{\rightarrow}
\newcommand{\onto}{\twoheadrightarrow}
\newcommand{\into}{\hookrightarrow}
\newcommand{\isomto}{\xrightarrow{\sim}}
\newcommand{\tensor}{\otimes}
\newcommand{\normal}{\lhd}
\newcommand{\rhobar}{\bar{\rho}}
\newcommand{\Aut}{\operatorname{Aut}}
\newcommand{\Hom}{\operatorname{Hom}}
\newcommand{\rec}{\operatorname{rec}}
\newcommand{\Gal}{\operatorname{Gal}}
\newcommand{\Ind}{\operatorname{Ind}}
\newcommand{\cInd}{\operatorname{c-Ind}}
\newcommand{\Spec}{\operatorname{Spec}}
\newcommand{\Proj}{\operatorname{Proj}}
\newcommand{\tr}{\operatorname{tr}}
\newcommand{\Frac}{\operatorname{Frac}}
\newcommand{\St}{\operatorname{St}}
\title[Local deformation rings for $GL_2$]{Local deformation rings for $GL_2$ and a Breuil--M\'{e}zard conjecture when $l\neq p$}
\author{Jack Shotton}
\newcommand{\gr}{\operatorname{gr}}
\newcommand{\Rbarbox}{\bar{R}\,^\square}
\newcommand{\Rbox}{R^\square}
\newtheorem*{remark*}{Remark}
\begin{document}
\maketitle
\begin{abstract}
  We compute the deformation rings of two dimensional mod $l$ representations of $\Gal (\bar{F}/F)$
  with fixed inertial type, for $l$ an odd prime, $p$ a prime distinct from $l$, and $F/\QQ_p$ a
  finite extension.  We show that in this setting an analogue of the Breuil--M\'{e}zard conjecture
  holds, relating the special fibres of these deformation rings to the mod $l$ reduction of
  certain irreducible representations of $GL_2(\Oc_F)$.
\end{abstract}
\section{Introduction}
\label{sec:introduction}

Let $p$ be a prime, and let $F$ be a finite extension of $\QQ_p$ with absolute Galois group $G_F$.
We study the (framed) deformation rings for two-dimensional mod $l$ representations of $G_F$, where
$l$ is an odd prime distinct from $p$.  More specifically, let $E$ be a finite extension of $\QQ_l$,
with ring of integers $\Oc$, uniformiser $\lambda$, and residue field $\FF$.  Let
\[ \rhobar : G_F \rarrow GL_2(\FF)\] be a continuous representation.  Then there is a universal
lifting (or framed deformation) ring $R^\square(\rhobar)$ parametrising lifts of $\rhobar$.  Our
main result relates congruences between irreducible components of $\Spec R^\square(\rhobar)$ to
congruences between certain representations of $GL_2(\Oc_F)$, where $\Oc_F$ is the ring of integers
of $F$.  Our method is to give explicit equations for the components of $\Spec R^\square(\rhobar)$,
which may be of independent use.

If $\tau : I_F \rarrow GL_2(E)$ is a continuous representation that extends to a representation of
$G_F$ (an \emph{inertial type}), then we say that a representation $\rho : G_F \rarrow
GL_2(\bar{E})$ has type $\tau$ if its restriction to $I_F$ is isomorphic to $\tau$.  Say that an
irreducible component of $\Spec R^\square(\rhobar)$ has type $\tau$ if a Zariski dense subset of its
$\bar{E}$-points correspond to representations of type $\tau$.  We define (definition
\ref{def:cycle-of-type}) a formal sum $\Cc(\rhobar,\tau)$ of irreducible components of the special
fibre $\Spec R^\square(\rhobar)\otimes_\Oc \FF$. For semisimple $\tau$, this is obtained as the
intersection with the special fibre of those components of $\Spec R^\square(\rhobar)$ having type
$\tau$; for non-semisimple $\tau$ this must be slightly modified.

To an inertial type $\tau$ we also associate an irreducible $E$-representation $\sigma(\tau)$ of
$GL_2(\Oc_F)$, by a slight variant on the definition of \cite{2002} (see section \ref{sec:Ktypes}).
For an irreducible $\FF$-representation $\theta$ of $GL_2(\Oc_F)$, define
$m(\theta,\bar{\sigma(\tau)})$ to be the multiplicity of $\theta$ as a Jordan--H\"{o}lder factor of
the mod $\lambda$ reduction of $\sigma(\tau)$.  Then we can state our main theorem (theorem
\ref{thm:BM}):
\begin{theorem*}
  Let $\rhobar : G_F \rarrow GL_2(\FF)$ be a continuous representation.  For each irreducible
  $\FF$-representation $\theta$ of $GL_2(\Oc_F)$, there is a formal sum $\Cc(\rhobar,\theta)$ of
  irreducible components of $\Spec R^\square(\rhobar)\otimes \FF$ such that, for each inertial type
  $\tau$, we have the equality
  \[ \Cc(\rhobar,\tau) = \sum_{\theta} m(\theta, \bar{\sigma(\tau)})\Cc(\rhobar,\theta).\]
\end{theorem*}
In fact the $\Cc(\rhobar,\theta)$ are uniquely determined (at least for those $\theta$ which
actually occur in some $\bar{\sigma(\tau)}$).  

This theorem is an analogue for mod $l$ representations of $G_F$ of the Breuil--M\'{e}zard
conjecture \cite{breuil2002}, which pertains to mod $p$ representations of $G_{\QQ_p}$.  Our
statement is not in the language of Hilbert--Samuel multiplicities used in \cite{breuil2002}, but
rather in the geometric language of \cite{JMJ:9091284}.  The original conjecture of
Breuil--M\'{e}zard was proved in most cases by Kisin \cite{Kisin2009-FontaineMazur}; further cases
were proved by Pa\v{s}k\={u}nas \cite{1209.5205} by local methods, and the full conjecture was proved when
$p>3$ in \cite{1309.1658}.  The conjecture was generalised to $n$-dimensional representations of
$G_F$ in \cite{JMJ:9091284}; the only case known, outside of those just mentioned, is that of
two-dimensional potentially Barsotti--Tate representations (see
\cite{GeeKisin2013-BreuilMezardBarsottiTate}).

In the $l \neq p$ setting, a comparison of special fibres of (very particular) local
deformation rings was used by Taylor in \cite{Taylor2008-AutomorphyII} to prove the change of level
results needed to obtain non-minimal automorphy lifting theorems; this is another motivation for our
result.  

Our method of proof is to completely explicitly determine equations for deformation rings of fixed
type, and indeed obtaining these explicit descriptions is another goal of this paper.  We reduce to
the tamely ramified case, in which we use the relation \[\phi \sigma \phi^{-1} = \sigma^q\] for
$\phi\in G_F$ a lift of Frobenius and $\sigma \in I_F$ a generator of tame inertia.  Since we are
considering lifts $\rho$ of fixed type, and so with fixed characteristic polynomial of
$\rho(\sigma)$, we may use the Cayley--Hamilton theorem to reduce this equation to one of degree at
most two in the entries of $\rho(\phi)$ and $\rho(\sigma)$.  These explicit descriptions show that
the irreducible components of $\Spec R^\square(\rhobar)\otimes \bar{E}$ are always smooth (which is
also proved in \cite{Pilloni2008-2Ddeformationslneqp}), and that the reduced deformation rings in
which the semisimplification of the restriction to inertia is fixed are always Cohen--Macaulay (see
\ref{sec:CM}). It is natural to ask whether these properties persist beyond the case of two
dimensional representations.  We note that the generic fibres of our local deformation rings have
been studied in \cite{Pilloni2008-2Ddeformationslneqp} and
\cite{Reduzzi2013-NumberIrreducibleComponents}, but their methods say little about the integral
structure.

In a forthcoming paper, we will extend theorem \ref{thm:BM} to the case of $n$-dimensional
representations using global methods.

The structure of this paper is as follows.  In section~\ref{sec:preliminaries} we define the
universal deformation rings and show how to reduce their study to the case when $\bar{\rho}$ is
tamely ramified.  We also prove some lemmas that will be useful in the calculations that follow.  In
section~\ref{sec:types} we define the deformation rings with fixed inertial type that we will need,
and discuss the construction of the representations $\sigma(\tau)$.  In
section~\ref{sec:breuil-mezard} we state and prove the main theorem, modulo the calculations of
section~\ref{sec:calculations} and results of section~\ref{sec:repthy}.
Section~\ref{sec:calculations} contains the calculations of explicit equations for local deformation
rings, divided into cases according to the value of $q$ mod $l$.  Finally, in
section~\ref{sec:repthy} we prove the results on the mod $l$ reduction of the
$\sigma(\tau)$ that are stated in section~\ref{sec:reduction-types} (and used in the proof of
theorem~\ref{thm:BM}).

\subsection{Acknowledgements}
\label{sec:acknowledgements}
This work forms part of my Imperial College, London PhD thesis, and I am grateful to my supervisor Toby Gee for
suggesting this topic, for comments on drafts of this paper, and for answering many questions.  I also thank Gebhard
B\"{o}ckle, Kevin Buzzard, David Helm, Yongquan Hu, Tristan Kalloniatis, Lue Pan, and Vytautas Pa\v{s}k\={u}nas for
helpful comments and corrections, and Jack Thorne for encouraging me to investigate the Cohen--Macaulay property of
these deformation rings.

This research was supported by the Engineering and Physical Sciences Research Council and the Philip
Leverhulme Trust, and part of this work was done during a visit to the University of Chicago
supported by the London Mathematical Society and the Cecil King Foundation.

\section{Preliminaries}
\label{sec:preliminaries}
\subsection{Fields and Galois groups.}
\label{sec:fields}
Suppose that $l \neq p$ are primes with $l > 2$.

Let $F/\QQ_p$ be a finite extension with ring of integers $\Oc_F$, maximal ideal $\pf_F$,
uniformiser $\varpi_F$ and residue field $k_F$ of order $q$.  Let $F$ have absolute Galois group
$G_F$, inertia group $I_F$, and wild inertia group $P_F$.  Let $I_F \onto I_F/\tilde{P}_F \cong
\ZZ_l$ be the maximal pro-$l$ quotient of $I_F$, so that $\tilde{P}_F/P_F \cong \prod_{l' \neq l,p}
\ZZ_{l'}$.  Note that $\tilde{P}_F$ is normal in $G_F$ and write $T_F = G_F / \tilde{P}_F$.  The
short exact sequence $1 \rarrow I_F/\tilde{P}_F \rarrow T_F \rarrow G_F/I_F \rarrow 1$ splits, so
that $T_F \cong \ZZ_l \rtimes \hat{\ZZ}$.  We fix topological generators $\sigma$ of this $\ZZ_l$
and $\phi$ of this $\hat{\ZZ}$ such that $\phi$ is a lift of arithmetic Frobenius.  Then the action
of $\hat{\ZZ}$ on $\ZZ_l$ is given by
\begin{equation} \label{fund-relation} \phi \sigma \phi^{-1} = \sigma^q.
\end{equation}
Let $L/F$ be an unramified quadratic extension, with residue field $k_L$.

Now let $E/\QQ_l$ be a finite extension with ring of integers $\Oc$, residue field $\FF$ and
uniformiser $\lambda$.  Let $\epsilon : G_F \rarrow \ZZ_l^\times$ be the $l$-adic cyclotomic
character, and let $\mathbbm{1} : G_F \rarrow \ZZ_l^\times$ be the trivial character.  If $A$ is any
$\Oc$-algebra then we will regard these as maps to $A^\times$ via the structure maps $\ZZ_l \rarrow
\Oc \rarrow A$.

Define two integers $a$ and $b$ by $a = v_l(q-1)$ and $b = v_l(q+1)$, where $v_l$ is the $l$-adic
valuation; at most one of $a$ and $b$ is non-zero, since $l$ is odd.

\subsection{Deformation rings.}

Suppose that $\bar{M}$ is an $n$-dimensional $\FF$-vector space and that $\bar{\rho} : G_F \rarrow
GL(\bar{M})$ is a continuous representation.  Let $(\bar{e}_i)_{i=1}^n$ be a basis for $\bar{M}$, so that
$\bar{\rho}$ gives a map $\bar{\rho} : G_F \rarrow GL_n(\FF)$.
  
Let $\Cc_\Oc$ denote the category of artinian local $\Oc$-algebras with residue field $\FF$, and
$\Cc_\Oc^\wedge$ the category of complete noetherian local $\Oc$-algebras with residue field
$\FF$.  If $A$ is an object of $\Cc_\Oc$ or $\Cc_\Oc^\wedge$, let $\mf_A$ be its maximal ideal.  Define
two functors
\[D(\rhobar), D^\square(\rhobar) : \Cc_\Oc \rarrow \mathbf{Set}\] as follows:
\begin{itemize}
\item $D(\rhobar)(A)$ is the set of equivalence classes of $(M,\iota)$ where: $M$ is a free rank~$n$
  $A$-module, $\rho : G_F \rarrow \Aut_A(M)$ is a continuous homomorphism, and $\iota~:~M\otimes_A
  \FF \isomto \bar{M}$ is an isomorphism commuting with the actions of $G_F$;
  
\item $D^\square(\rhobar)(A)$ is the set of equivalence classes of $(M,\rho, (e_i)_{i=1}^n)$ where: $M$ is
  a free rank $n$ $A$-module, $\rho : G_F \rarrow \Aut_A(M)$ is a continuous homomorphism and
  $(e_i)_{i=1}^n$ is a basis of $M$ as an $A$-module, such that the isomorphism $\iota:M \otimes_A \FF
  \isomto \bar{M}$ defined by $\iota: e_i \otimes 1 \mapsto \bar{e}_i$ commutes with the actions of $G_F$.
\end{itemize}
In the first case, $(M,\rho,\iota)$ and $(M',\rho',\iota')$ are
equivalent if there is an isomorphism $\alpha:M \rarrow M'$, commuting with the actions of $G_F$,
such that $\iota = \iota'\circ \alpha$; in the second case, $(M,\rho,(e_i)_i)$ and
$(M',\rho',(e'_i)_i)$ are isomorphic if the map $M \rarrow M'$ defined by $e_i \mapsto e_i'$ commutes with
the actions of $G_F$.  There is a natural transformation of functors $D^\square(\rhobar)
\rarrow D(\rhobar)$ given by forgetting the basis.

Alternatively, when $\rhobar$ is regarded as a homomorphism to $GL_n(\FF)$, we have the equivalent definitions
\[D^\square(\rhobar)(A) = \{\text{continuous $\rho : G_F \rarrow GL_n(A)$ lifting $\bar{\rho}$}\}\]
and \[D(\rhobar)(A) = \{\text{continuous $\rho : G_F \rarrow GL_n(A)$
  lifting $\bar{\rho}$}\}/\text{conjugacy by $1 + M_n(\mf_A)$}.\] 
  
The functor $D(\rhobar)$ is not usually pro-representable, but the functor $D^\square(\rhobar)$
always is (see, for example, \cite{Kisin2009-ModuliFFGSandModularity} (2.3.4)):
\begin{definition} The \emph{universal lifting ring} (or universal framed deformation ring) of
  $\rhobar$ is the object $R^\square(\rhobar)$ of $\Cc^\wedge_{\Oc}$ that pro-represents the functor
  $D^\square(\rhobar)$.  The universal lift is denoted $\rho^\square : G_F \rarrow
  GL_n(R^\square(\rhobar))$.
\end{definition}

Recall the following calculation (see e.g. \cite{BLGGT2014-PotentialAutomorphy} section 1.2):

\begin{lemma} \label{lem:dimension}
  The ring $R^\square(\rhobar)[1/l]$ is generically formally smooth of dimension $n^2$.
\end{lemma}

The next lemma enables us to reduce to the case where the residual representation is trivial on
$\tilde{P}_F$.  Suppose that $\theta$ is an irreducible $\FF$-representation of $\tilde{P}_F$.  Then
by \cite{ClozelHarrisTaylor2008-Automorphy}, lemma 2.4.11, there is a lift of $\theta$ to an
$\Oc$-representation of $\tilde{P}_F$, which may be extended to an $\Oc$-representation
$\tilde{\theta}$ of $G_\theta$, where $G_\theta$ is the group $\{g\in G_F : g\theta g^{-1} \cong
\theta\}$.  For each irreducible representation $\theta$ of $\tilde{P}_F$, we pick such a
$\tilde{\theta}$ and a finite free $\Oc$-module $N(\theta)$ on which $\tilde{P}_F$ acts as
$\tilde{\theta}$.  If $M$ is a set-finite $\Oc$-module with a continuous action $\rho$ of $G_F$, then
define
\[M_\theta = \Hom_{\tilde{P}_F}(\tilde{\theta}, M).\] The module $M_\theta$ has a natural continuous
action $\rho_\theta$ of $G_\theta$ given by $(gf)(v) = gf(g^{-1}v)$; the subgroup $\tilde{P}_F$ of $G_\theta$
acts trivially.

\begin{lemma} \label{lem:tame-reduction} (Tame reduction) \begin{enumerate}
  \item Let $M$ be a set-finite $\Oc$-module with a continuous action of $G_F$.  Then there is a
    \emph{natural} isomorphism
    \[ M = \bigoplus_{[\theta]}\Ind_{G_\theta}^{G_F}\left( N(\theta)\otimes_{\Oc}
      M_\theta\right),\] where $[\theta]$ runs over $G_F$-conjugacy classes of irreducible
    representations of $\tilde{P}_F$.

  \item The isomorphism of part (1) induces a natural isomorphism of functors:
    \[ D(\rhobar) \isomto \prod_{[\theta]} D(\rhobar_\theta)\]
    where $\theta$ runs through a set of representatives for the $G_F$-conjugacy classes of
    irreducible representations of $\tilde{P}_F$. 

  \item If $R^\square(\rhobar_\theta)$ is the universal framed deformation ring for the
    representation $\rhobar_\theta$ of $G_\theta / \tilde{P}_F$, then
    \[ R^\square(\rhobar) \cong \left(\widehat{\bigotimes}_{[\theta]} R^\square(\rhobar_\theta)
    \right)[[X_1, \ldots, X_{n^2 - \sum n^2_\theta}]]\] where $n_\theta = \dim \rho_\theta$.  This
    isomorphism lies above the isomorphism $D(\rhobar) \isomto \prod_{[\theta]}D(\rhobar_\theta)$ of
    part (2).
\end{enumerate}
\end{lemma}
\begin{proof} The first two parts are in \cite{ClozelHarrisTaylor2008-Automorphy}: part (1) is lemma
  2.4.12 and part (2) is corollary 2.4.13.  Part (3) is the refinement to framed deformations
  obtained by keeping track of a basis in the construction of part (1) of the proposition, as in
  \cite{Choi2009-LocalDeformationRings}, proposition 2.0.5.

  As \cite{Choi2009-LocalDeformationRings} is not easily available, we sketch the argument for part (3): let
  $[\theta_1],[\theta_2],\ldots$ be the $G_F$-conjugacy classes of irreducible
  $\tilde{P}_F$-representations.  Pick left coset representatives $(g_{ij})_j$ for $G_{\theta_i}$ in
  $G_F$.  Write $N_i$ for $N(\theta_i)$, and choose an $\Oc$-basis $(f_{ik})_k$
  of $N_i$.  

  Let $A$ be an object of $\Cc_\Oc$, $M$ be a free rank $n$ $A$-module with a continuous action of
  $G_F$, and $M_{\theta_i}$ be as above.  Given (for each $i$) a basis
  $(e_{il})_{l=1}^{n_{\theta_i}}$ of $M_{\theta_i}$, we can produce a basis $(e_{ijkl})_{j,k,l}$
  of \[M_{\theta_i} = A[G_F] \otimes_{A[G_\theta]} (N_i \otimes_\Oc M_{\theta_i})\] defined by
  \[ e_{ijkl} = g_{ij} \otimes f_{ik} \otimes e_{il}.\] Then $(e_{ijkl})_{i,j,k,l}$ is a basis of
  $M$.  

  Let $\Fc(A)$ be the set of $\mathbf{Y} = (Y_{ijkl,i'j'k'l'})$ which are $n\times n$ matrices of elements of
  $\mf_A$ such that
  \[Y_{ijkl,i'j'k'l'} = 0 \text{ if $i = i'$ and $j = j' = k = k' = 1$}\] (so that $n^2 - \sum
  n_{\theta_i}^2$ `free' entries of $\mathbf{Y}$ remain).  Then $\Fc$ defines a functor on $\Cc_\Oc$
  pro-represented by $\Oc[[X_1,\ldots,X_{n^2 - \sum n_\theta^2}]]$ (the variables $X$ being simply
  an enumeration of those $Y_{ijkl,i'j'k'l'}$ which can be non-zero).

  We then have a natural transformation of functors 
  \[ \Fc \times \prod_{[\theta]}D^\square(\rhobar_\theta) \rarrow D^\square(\rhobar)\] taking the
  tuple $\left(\mathbf{Y}, (M_{\theta_i},\rho_{\theta_i},e_{il})_{i}\right)$ to the tuple
  \[\left(\bigoplus_{i} \Ind_{G_{\theta_i}}^{G_F}(N_i\otimes_\Oc M_{\theta_i}),
  \bigoplus_{i}\Ind_{G_{\theta_i}}^{G_F}(\tilde{\theta}_i\otimes_\Oc \rho_{\theta_i}), (I_n + \mathbf{Y})(e_{ijkl})_{i,j,k,l}\right).\]
  Then one can check (and this is what is done in \cite{Choi2009-LocalDeformationRings}, proposition
  2.0.5) that this is in fact an isomorphism, and so we get the claimed isomorphism of pro-representing objects.
\end{proof}

\subsection{Twisting.}

\begin{lemma} \label{twisting}
Suppose that $\chi : G_F \rarrow \Oc^\times $ is any character.  Then there is a natural isomorphism
\[R^\square(\rhobar) \isomto R^\square(\rhobar \tensor \bar{\chi}).\] 
Moreover, if $\chi_1$ and $\chi_2$ satisfy $\bar{\chi}_1 = \bar{\chi}_2$ then they induce the same
maps $\Rbox(\rhobar)\otimes\FF \isomto \Rbox(\rhobar\otimes \bar{\chi}_i)\otimes \FF$.
\end{lemma}
\begin{proof}
  This follows easily from the isomorphism of functors \[D^\square(\rhobar) \rarrow
  D^\square(\rhobar\tensor\bar{\chi})\] given by tensoring with $\chi$ (remembering that we are
  considering $\Oc$-algebras).  For the last statement, observe that if the functors are restricted
  to $\FF$-algebras then the isomorphism only depends on $\bar{\chi}$.
\end{proof}

Since every $\FF$-valued character lifts to $\Oc$ (using the Teichm\"{u}ller lift) this shows
that $R^\square(\rhobar) \cong R^\square(\rhobar \otimes \bar{\chi})$ for every $\bar{\chi} : G_F
\rarrow \FF^\times$.

We also need the calculation of the universal deformation ring of a character, to which some of our
calculations reduce.  This is completely standard, but we include it as a simple illustration of the
method.

\begin{lemma} \label{character-deformation} Let $\bar{\chi} : G_F \rarrow \FF^\times$ be a
  continuous character.  Then
  \[R^\square(\bar{\chi}) = \frac{\Oc[[X,Y]]}{\left((1 + X)^{l^a} - 1\right)}\] has $l^a$
  irreducible components, indexed by the $l^a$th roots of unity.  They are formally
  smooth of relative dimension one over $\Oc$.
\end{lemma}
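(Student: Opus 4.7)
Since $\bar\chi$ is one-dimensional, framed and unframed deformations coincide (the conjugation action of $GL_1$ is trivial), so we are computing the universal deformation ring of $\bar\chi$. By part~1 of lemma \ref{twisting}, twisting by the Teichm\"{u}ller lift of $\bar\chi^{-1}$ reduces us to the case $\bar\chi = \trivial$, so that a lift to $A \in \mathfrak{A}^\wedge$ is a continuous homomorphism $\chi : G_F \to 1 + \mm_A$. Since each finite artinian quotient of $A$ has residue field $\FF$ of characteristic $l$ and $1 + \mm_{A'}$ has the same cardinality as $\mm_{A'}$, the group $1 + \mm_A$ is pro-$l$; hence any such $\chi$ factors through the maximal pro-$l$ quotient of $G_F^{ab}$.

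Next, by local class field theory $G_F^{ab} \cong \widehat{F^\times}$, and the decomposition $F^\times \cong \varpi_F^\ZZ \times \mu_{q-1} \times (1 + \varpi_F \OO_F)$, together with the fact that $1 + \varpi_F \OO_F$ is pro-$p$, shows that the maximal pro-$l$ quotient of $G_F^{ab}$ is isomorphic to $\ZZ_l \times \ZZ/l^a\ZZ$. The $\ZZ_l$ factor is topologically generated by a Frobenius lift $\phi$, and the $\ZZ/l^a\ZZ$ factor by the image $\sigma'$ of a generator of the $l$-Sylow subgroup of $\mu_{q-1}$ (of order exactly $l^a$ by definition of $a$). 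A lift $\chi$ is therefore uniquely determined by $\chi(\phi) = 1 + Y$, which may be any element of $1 + \mm_A$, and $\chi(\sigma') = 1 + X$, subject only to $(1+X)^{l^a} = 1$. This yields the universal presentation
\[ R^\square(\bar\chi) \cong \OO[[X,Y]]/\bigl((1+X)^{l^a} - 1\bigr). \]

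For the component description, the assumption that $\OO$ contains all $(q^2-1)$th roots of unity, together with $l^a \mid q-1$, implies $\OO$ contains all $l^a$th roots of unity. Hence $(1+X)^{l^a} - 1$ factors as $\prod_{\zeta^{l^a} = 1}\bigl(X - (\zeta - 1)\bigr)$ into $l^a$ distinct linear factors in $\OO[[X,Y]]$, whose zero loci are the minimal primes; each quotient $\OO[[X,Y]]/\bigl(X - (\zeta - 1)\bigr) \cong \OO[[Y]]$ is formally smooth over $\OO$ of relative dimension $1$, and the components are naturally indexed by the $l^a$th roots of unity in $\OO$.

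There is no serious obstacle to this plan: the only non-routine ingredient is local class field theory, used to pin down the maximal pro-$l$ quotient of $G_F^{ab}$; the reduction to the trivial character, the presentation, and the factorization into formally smooth components are then immediate.
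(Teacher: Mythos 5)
Your proof is correct, but it takes a genuinely different route from the paper's. The paper never invokes local class field theory: it works directly with the tame quotient $T_F \cong \ZZ_l \rtimes \hat{\ZZ}$ fixed in section \ref{notation}, kills $\tilde{P}_F$ by noting that $\chi(g)$ is both a unit of finite prime-to-$l$ order and congruent to $1$ modulo $\mm_{\mathcal{A}}$, and then extracts the constraint $(1+X)^{l^a}=1$ from the single relation $\phi\sigma\phi^{-1}=\sigma^q$, which forces $\chi(\sigma)^{q-1}=1$ and hence $\chi(\sigma)^{l^a}=1$. You instead observe that any lift lands in the pro-$l$ group $1+\mm_A$, factors through the maximal abelian pro-$l$ quotient of $G_F$, and identify that quotient as $\ZZ_l \times \ZZ/l^a\ZZ$ via the Artin map and the decomposition $F^\times \cong \varpi_F^{\ZZ} \times \mu_{q-1} \times (1+\varpi_F\OO_F)$. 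Both arguments are complete and give the same presentation; your use of the hypothesis that $\OO$ contains the $(q^2-1)$th (hence $l^a$th) roots of unity to split $(1+X)^{l^a}-1$ into distinct linear factors, with each component $\OO[[Y]]$ formally smooth, is in fact more detail than the paper supplies for the component statement. What the paper's more elementary route buys is consistency with the rest of the paper's computations: the variable $X$ there literally records $\chi(\sigma)$ for the chosen generator $\sigma$ of tame inertia, which is what later results (e.g.\ proposition \ref{simplest-case}, where ``fixing the type is the same as fixing $P$'') rely on; with your CFT description one must additionally note that the $\ZZ/l^a\ZZ$ factor is the image of inertia, so that the component index still reads off the restriction of the universal character to $I_F$ -- a point worth making explicit, though it is not needed for the lemma as stated.
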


\begin{proof}
  By lemma \ref{twisting}, we may take $\bar{\chi}$ to be trivial.  If $\chi$ is any lift of
  $\bar{\chi}$ to an object $A$ of $\Cc_\Oc$, then for $g \in \tilde{P}_F$ we must
  have $\chi(g)^n = 1$ for some $n$ coprime to $l$, and therefore $\chi(g) = 1$, so that we are
  reduced to considering characters of $T_F$.  We must have that $\chi(\sigma)^q = \chi(\sigma)$ and
  $\chi(\sigma) \equiv 1 \mod \mf_A$, and
  therefore that $\chi(\sigma)^{l^a} = 1$.  We are then free to choose $\chi(\phi)$.  Writing
  $\chi(\sigma) = 1+X$ and $\chi(\phi) = 1+Y$, we have shown
  that \[D^\square(\bar{\chi})(A) =
  \Hom_{\Cc_\Oc^\wedge}\left(\frac{\Oc[[X,Y]]}{\left((1 + X)^{l^a} - 1\right)},A
  \right)\] functorially, and so the universal framed deformation ring is as claimed.
\end{proof}

\subsection{Multiplicities and cycles}
\label{sec:mult-cycl}

Suppose that $X$ is a noetherian scheme and that $\Fc$ is a coherent sheaf on $X$.  Let $Y$ be the
scheme-theoretic support of $\Fc$, and let $d \geq \dim Y$.  Let $\Zc^d(X)$ be the free abelian
group on the $d$-dimensional points of $X$; elements of $\Zc^d(X)$ are called $d$-dimensional
cycles.  If $\af \in X$ is a point of dimension $d$ write $[\af]$ for the corresponding element of
$\Zc^d(X)$ and define the multiplicity $e(\Fc,\af)$ to be the length of $\Fc_{\af}$ as an
$\Oc_{Y,\af}$-module (this is zero if $\af \not \in Y$).

\begin{definition} The cycle $Z^d(\Fc)$ associated to $\Fc$ is the element
  \[\sum_{\af}e(\Fc,\af) [\af] \in \Zc^d(X).\]
\end{definition}

If $X = \Spec A$ is affine and $\Fc = \widetilde{M}$ for a finitely generated $A$-module $M$, then we
will write $Z^d(M)$ for $Z^d(\Fc)$.  

If $i:X\rarrow X'$ is a closed immersion of $X$ in a noetherian scheme $X'$, then there is a natural
inclusion $i_*:\Zc^d(X) \rarrow \Zc^d(X')$ for each $d$.  For a coherent sheaf $\Fc$ on $X$ whose
support has dimension at most $d$, we then have \[i_*(Z^d(\Fc)) = Z^d(i_*(\Fc)).\]  We will often use
this compatibility without comment.

A cycle is \textbf{effective} if it is of the form $\sum n_{\mathfrak{a}} [\mathfrak{a}]$ for
$n_\mathfrak{a} \geq 0$.  Say that an effective cycle $C_1$ is a \textbf{subcycle} of an effective
cycle $C_2$ if $C_2-C_1$ is also effective.

\subsection{A determinantal ring.}

For $a$, $b$ and $c$ natural numbers, if $I$ is the ideal generated by the $a \times a$ minors of a $b
\times c$ matrix with independent indeterminant entries over a Cohen--Macaulay ring $A$, then $A/I$
is always Cohen--Macaulay (see \cite{Eisenbud1995-CommutativeAlgebra} theorem 18.18).  We include a simple proof
in the very special case that we need below.

\begin{remark*} The proof given below is incorrect, but the proposition is correct.  See Section~\ref{sec:erratum} for
  details.  We thank Lue Pan for pointing this out.
\end{remark*}

\begin{proposition} \label{prop:minors} Let $k \geq 2$ be an integer and let $A$ be either a field or a
  discrete valuation ring. Let $R = A[X_1,\ldots,X_k,Y_1,\ldots,Y_k]$ and let $I \normal R$ be the
  ideal generated by the $2\times 2$ minors of:
  \[ \begin{pmatrix} X_1 & X_2 & \ldots & X_k \\ Y_1 & Y_2 & \ldots & Y_k \end{pmatrix}.\] Let $S =
  R/I$.  Then $S$ is a Cohen--Macaulay domain and is flat over $A$.  It is Gorenstein if and only if
  $k=2$.
  
  The same is true if we replace $S$ by its completion $S^\wedge$ at the `irrelevant' ideal
  $(X_1,\ldots,X_k,Y_1,\ldots,Y_k)$.
\end{proposition}

\begin{proof}
  Note that $R$ and $S$ are naturally graded $A$-algebras.

  Suppose that $A$ is a field.  It is easy to see that $\Proj(S)$ is a smooth irreducible projective
  variety over $A$ of dimension $k+1$ --- it is covered by the open sets $\{X_i \neq 0\}$ and $\{Y_i
  \neq 0\}$, each of which is isomorphic to $(\AA^1_A\setminus \{0\})\times \AA^k_A$.  Thus $S$ is a
  domain.  We may extend $A$ so that its cardinality is at least $k+1$, and choose pairwise distinct
  $\alpha_1,\ldots, \alpha_k \in A^\times$.

  I claim that $(X_1 - \alpha_1Y_1, \ldots, X_k - \alpha_k Y_k, Y_1 + \ldots + Y_k)$ is a regular
  sequence in $S$.  To see this, observe that $\Proj\left(S/(X_1 - \alpha_1 Y_1, \ldots, X_i - \alpha_i
    Y_i)\right)$ is reduced (we may check this on the affine pieces) and that its irreducible components are
  all of the form
  \[\Proj\left(\frac{R}{(X_j - \alpha_{i_0} Y_j)_{1\leq j\leq k} + (X_j,Y_j)_{1 \leq j \leq i, j \neq
        i_0}}\right)\] for $1 \leq i_0 \leq i$ or of the form
  \[ \Proj(S/(X_1,\ldots, X_i, Y_1,\ldots, Y_i)).\] Now it is easy to check that $X_{i+1} -
  \alpha_{i+1}Y_{i+1}$ (if $i < k$) or $Y_1 +\ldots + Y_k$ (if $i=k$) is a non-zerodivisor on each
  of these components, and so is a non-zerodivisor on $S/(X_1 - \alpha_1 Y_1, \ldots, X_i - \alpha_i
  Y_i)$ as required.

  Now \[S/((X_i - \alpha_i Y_i)_i,Y_1 + \ldots + Y_k)\cong A[Y_2,\ldots,Y_k]/(Y_2,\ldots,Y_k)^2\] is
  Gorenstein if and only if $k=2$, as required.

  If $A$ is a DVR then the following easy lemma (a specialisation of \cite{1111.3654} proposition
  2.2.1) gives the result.

  \begin{lemma} If $A$ is a DVR and $S$ is a finitely generated $A$-algebra such that $S \otimes
    A/\mf_A$ and $S \otimes \Frac A$ are domains of the same dimension, then $S$ is flat over $A$
    (that is, a uniformiser of $A$ is a regular parameter in $S$).
  \end{lemma}
  
  The final statement of the proposition follows from the facts that both localisation and
  completion preserve the properties of being Gorenstein, Cohen--Macaulay, or $A$-flat; $S^\wedge$
  is a domain because its associated graded ring is $S$, which is a domain.
\end{proof}

\section{Types}
\label{sec:types}
\subsection{Inertial types.}
\label{sec:inertial-types}

\begin{definition}
  An \emph{inertial type} $\tau$ (of dimension $n$) is an equivalence class of pairs
  $(r_\tau,N_\tau)$ such that:
  \begin{itemize}
  \item $r_\tau : I_F \rarrow GL_n(\bar{E})$ is a representation with open kernel; 
  \item$N_\tau$ is a nilpotent $n\times n$ matrix over $\bar{E}$; 
  \item$(r_\tau,N_\tau)$ extends to a Weil--Deligne representation of $G_F$.
  \end{itemize}
  In particular, $N_\tau$ commutes with the image of $r_\tau$.  Two such pairs are equivalent if
  they are conjugate by an element of $GL_n(\bar{E})$.
\end{definition}

We say that a continuous representation $\rho : G_F \rarrow GL_n(\bar{E})$ has inertial type
$\tau$ if the restriction to inertia of the associated Weil--Deligne representation is equivalent to
$\tau$.  

We define some particular two-dimensional types which will often arise.  They will all be of the
form $(r,N)$ with $r|_{\tilde{P}_F}$ trivial, and are therefore determined by $r(\sigma)$ and $N$.
Define:
\begin{itemize}
\item $\tau_{\zeta,s}$ by $r(\sigma) = \twomat{\zeta}{0}{0}{\zeta}$ and $N = 0$, where $\zeta$ is an
  $l^a$th root of unity ($s$ is for `split');
\item $\tau_{\zeta,ns}$ by $r(\sigma) = \twomat{\zeta}{0}{0}{\zeta}$ and $N = \twomat{0}{1}{0}{0}$,
  where $\zeta$ is an $l^a$th root of unity ($ns$ is for `non-split');
\item $\tau_{\zeta_1,\zeta_1}$ by $r(\sigma) = \twomat{\zeta_1}{0}{0}{\zeta_2}$ and $N = 0$ where,
  $\zeta_1$ and $\zeta_2$ are distinct $l^a$th roots of unity;
\item $\tau_\xi$ by $r(\sigma) = \twomat{\xi}{0}{0}{\xi^{-1}}$ and $N=0$ where, $\xi$ is a \emph{non-trivial}
  $l^b$th root of unity.
\end{itemize}
To see that $\tau_\xi$ \emph{is} a type, note that if $L/F$ is the unramified quadratic extension,
then there is a character of $G_L/\tilde{P}_F$ mapping $\sigma$ to $\xi$, which when induced to
$G_F$ gives a representation of type $\tau_\xi$.

\subsection{Deformation rings with fixed type.}
\label{sec:deformation-rings-type}
\begin{definition}
  Let $\tau$ be an inertial type.  Then $R^\square(\rhobar, \tau)$ is the maximal reduced,
  $l$-torsion free quotient of $R^\square(\rhobar)$ with the following property: if $x :
  R^\square(\rhobar) \rarrow GL_n(\bar{E})$ is a continuous homomorphism such that the
  associated representation $\rho_x : G_F \rarrow GL_n(\bar{E})$ has type $\tau$, then $x$
  factors through $R^\square(\rhobar,\tau)$.
\end{definition}

The rings $R^\square(\rhobar) \otimes \FF$ and $R^\square(\rhobar,\tau) \otimes \FF$ will occur very
often, and so we denote them respectively by $\Rbarbox(\rhobar)$ and $\Rbarbox(\rhobar,\tau)$.

\emph{From now on suppose that $n=2$.}  Write $\tau = (r_\tau,N_\tau)$ and assume that $E$ is large enough that all of the roots of the
characteristic polynomial of $r_\tau$ lie in $E$.  Let $R^\square(\rhobar,\tau)^\circ$ be the maximal quotient of
$R^\square(\rhobar)$ on which:

\begin{itemize}
\item if $r_\tau$ is not scalar then, for all $g\in I_F$, the characteristic polynomial of $\rho^\square(g)$ agrees with that of
  $r_\tau$;
\item if $r_\tau$ is scalar and $N_\tau=0$ then, for all $g \in I_F$, $\rho^\square(g)$ is scalar and agrees
  with $r_\tau$;
\item if $r_\tau$ is scalar and $N_\tau \neq 0$ then, for all $g \in I_F$, the characteristic
  polynomial of $\rho^\square(g)$ agrees with that of $r_\tau$. Moreover, we have 
  \begin{equation} \label{eq:phi} q(\tr \rho^\square(\phi))^2 = (q+1)^2\det(\rho^\square(\phi)).
  \end{equation}

\end{itemize}

It is clear that these quotients exist and that the conditions imposed are deformation problems for $\bar{\rho}$.

\begin{lemma}
  The ring $R^\square(\rhobar, \tau)$ is a reduced $l$-torsion free quotient of
  $R^\square(\rhobar, \tau)^\circ$.  

  If $N_\tau = 0$, then we have that $R^\square(\rhobar,\tau)$ is equal to the maximal reduced
  $l$-torsion free quotient of $R^\square(\rhobar, \tau)^\circ$.
\end{lemma}
\begin{proof}
  The first part is clear unless $r_\tau$ is scalar and $N_\tau \neq 0$.  In this case, we must show
  that any representation $\rho : G_F \rarrow GL_2(\bar{E})$ of type $\tau$ satisfies equation
  \eqref{eq:phi}.  The Weil--Deligne representation $(r,N)$ corresponding to such a $\rho$ satisfies
  $r |_{I_F} = r_\tau$ and $N\neq 0$.  Then $r(\phi) N = qNr(\phi)$ implies that $r(\phi)$ preserves
  the line $\ker N$ and the quotient $\bar{E}^2/\ker N$.  If it acts as $\alpha$ on the former and
  $\beta$ on the latter then we must have $\alpha = q\beta$; as $\alpha$ and $\beta$ are the
  eigenvalues of $\rho(\phi)$ the equation \eqref{eq:phi} is easily verified.
  
  The final claim follows from the simple observation that any $\bar{E}$-point of
  $R^\square(\rhobar,\tau)^{\circ}$ has associated Galois representation of type $\tau$, except perhaps if $r_\tau$ is
  scalar and $N_\tau \neq 0$.  
\end{proof}

\begin{remark}
  If $R$ is a reduced, $l$-torsion free quotient of $R^\square(\rhobar)$ such that $R^\square(\rhobar,\tau)$ is a
  quotient of $R$, then $R = R^\square(\rhobar, \tau)$ if and only if the closed points of type
  $\tau$ are Zariski dense in $\Spec R[1/l]$.  In our calculations, when this is true it will always
  be clear by inspection.
\end{remark}

\subsection{$K$-Types}
\label{sec:Ktypes}

Let $G = GL_2(F)$, $K = GL_2(\Oc_F)$, and for $N\geq 1$ let $K(N) = 1 + M_2(\pf_F^N)$ and $K_0(N) =
\left\lbrace \twomat{a}{b}{c}{d} \; : \; c \in \pf_F^N\right \rbrace$.  Let $U_0 = \Oc_F^\times$ and
for $N \geq 1$ let $U_N = 1 + \pf_F^N$. The \emph{exponent} of a character $\chi$ of $\Oc_F^\times$
is the smallest $N \geq 0$ such that $\chi$ is trivial on $U_N$.  If $\pi$ is an irreducible
admissible representation of $GL_m(F)$ (we only need $m=1$ and $m=2$) over $\bar{E}$, let
$\rec(\pi)$ be the continuous representation of $W_F$ over $\bar{E}$ associated to $\pi$ under the
local Langlands correspondence (normalised so as to be preserved by automorphisms of $\bar{E}$).

  For each two-dimensional inertial type $\tau = (r_\tau,N_\tau)$, we define an irreducible representation
  $\sigma(\tau)$ by the following recipe:
  \begin{itemize}
  \item If $\tau = \tau_{1,s}$, then $\sigma(\tau)$ is the trivial representation
    of $K$.
  \item If $\tau = \tau_{1,ns}$, then $\sigma(\tau)$ is the inflation to
    $K$ of the Steinberg representation $\St$ of $GL_2(k_F)$.
  \item If $\tau = (\mathbbm{1} \oplus \mathrm{rec}(\epsilon)|_{I_F},0)$ for a non-trivial character
    $\epsilon$ of $F^\times$ of exponent $N$, then \[\sigma(\tau) = \Ind_{K_0(N)}^K \epsilon,\] where
    $\epsilon\left(\twomat{a}{b}{c}{d}\right) = \epsilon(a)$.
  \item If $\tau = (\mathrm{rec}(\pi) |_{I_F},0)$ for a cuspidal representation $\pi$ of $GL_2(F)$,
    then by \cite{BushnellHenniart2006_GL2LocalLanglands}, 15.5 Theorem, there is a certain subgroup $J
    \subset G$, containing the center of $G$ and compact modulo center, and a
    representation $\Lambda$ of $J$ such that \[\pi = \cInd_J^G\Lambda.\]  By conjugating, we may
    suppose that the maximal compact subgroup $J^0$ of $J$ is contained in $K$.  We then have
    \[\sigma(\tau) = \Ind_{J^0}^K (\Lambda|_{J^0}).\]
  \item If $\tau = \tau' \tensor \mathrm{rec}(\chi)|_{I_F}$, then $\sigma(\tau) = \sigma(\tau')
    \tensor (\chi|_{U_0}\circ \det)$.
  \end{itemize}

  This is a slightly modified version of the construction in \cite{2002} --- the construction there
  only depends on $r_\tau$, and agrees with ours whenever $r_\tau$ is not scalar.  The following is an easy consequence of \cite{2002}:

  \begin{proposition}
    If $\sigma(\tau)$ is contained in an irreducible admissible representation $\pi$ of $GL_2(F)$
    and $\rec(\pi) = (r,N)$ then $r|_{I_F} \cong r_\tau$ and either $N\cong N_\tau$ or $N_\tau \neq 0$
    and $N=0$.
    
    If $\pi$ is infinite-dimensional, then the converse is true.
  \end{proposition}

\subsection{Reduction of types}

\label{sec:reduction-types}

Suppose that $\bar{r} : I_F \rarrow GL_2(\bar{\FF})$ is such that $\bar{r}$ extends to $G_F$.  

\begin{definition}
  The set $L(\bar{r})$ is the set of types $\tau$ such that there exists a representation $\rho :
  G_F \rarrow GL_2(\Oc_{\bar{E}})$ of type $\tau$ satisfying
\[\bar{\rho}|_{I_F}\cong \bar{r}.\]
\end{definition}

If $\bar{r}|_{\tilde{P}_F}$ is non-scalar then we abuse notation and also write $L(\bar{r})$ for the
set of $r$ such that $(r,0) \in L(\bar{r})$, as in this case every element of $L(\bar{r})$ is of
this form.

\begin{lemma} \label{lem:type-reduction} Suppose that $\bar{r}$ is trivial on $\tilde{P}_F$.  Then
  each element of $L(\bar{r})$ is one of the types $\tau_{\zeta,s}$, $\tau_{\zeta,ns}$,
  $\tau_{\zeta_1,\zeta_2}$, $\tau_\xi$ defined in section \ref{sec:inertial-types}.
\end{lemma}

\begin{proof} Suppose that $\rho : G_F \rarrow GL_2(\Oc_{\bar{E}})$ is of type $\tau$ and is such
  that $\rhobar|_{I_F} \cong \bar{r}$.  As $\bar{r}|_{\tilde{P}_F}$ is trivial, $\rho$ must also be
  trivial on $\tilde{P}_F$ and its type is determined by the eigenvalues of $\rho(\sigma)$ and by a
  nilpotent matrix $N$ commuting with $\rho(\sigma)$.  Now, the fundamental relation $\phi \sigma
  \phi^{-1} = \sigma^q$ shows that the eigenvalues of $\rho(\sigma)$ are the same (but perhaps in a
  different order) as those of $\rho(\sigma)^q$, and this implies that they are $(q^2-1)$th roots of
  unity.  Moreover, they are congruent to 1 modulo the maximal ideal of $\Oc_{\bar{E}}$, and so must
  in fact be either $l^{a}$th or $l^{b}$th roots of unity (recall that at most one of $a$ and $b$ is
  non-zero, since $l \neq 2$).  If they are distinct $l^a$th roots of unity, then $N$ must be zero
  and $\tau = \tau_{\zeta_1,\zeta_2}$; if they are equal $l^a$th roots of unity then $\tau =
  \tau_{\zeta,s}$ or $\tau_{\zeta,ns}$; if they are $l^b$th roots of unity then they must be $\xi$
  and $\xi^q = \xi^{-1}$ for an $l^b$th root of unity $\xi$.  Moreover the case $\xi = 1$ has
  already been dealt with and so we may assume that $\xi \neq 1$, in which case $N = 0$ and $\tau =
  \tau_\xi$.
\end{proof}

\begin{lemma} \label{lem:type-lifts}
  \begin{enumerate}
  \item Suppose that $\bar{r} |_{\tilde{P}_F}$ is irreducible.  There is a lift $r$ of $\bar{r}$ to
    $GL_2(\bar{E})$, which we fix.  Then $L(\bar{r}) = \{r \otimes \chi\}_\chi$ as $\chi$ runs over
    the set of characters $\chi : I_F \rarrow \bar{E}^\times$ which extend to $G_F$ and reduce to
    the trivial character.
  \item Suppose that $\bar{r} |_{\tilde{P}_F} \cong (\bar{r}_1 \oplus \bar{r}_2)|_{\tilde{P}_F}$
    where $\bar{r}_1$ and $\bar{r}_2$ are distinct characters of $G_F$.  There are lifts $r_1$ and
    $r_2$ of $\bar{r}_1$ and $\bar{r}_2$ to $\bar{E}^\times$, which we fix.  Then $L(\bar{r}) = \{(r_1|_{I_F}
    \otimes \chi_1) \oplus (r_2|_{I_F} \otimes \chi_2)\}_{\chi_1,\chi_2}$ where $\chi_1, \chi_2$ run over all
    pairs of characters $I_F \rarrow \bar{E}^\times$ which extend to $G_F$ and reduce to the trivial
    character.
  \item Suppose that $\bar{r} |_{\tilde{P}_F} \cong (\bar{r}_1 \oplus \bar{r}_1^c)|_{\tilde{P}_F}$
    where $\bar{r}_1$ and $\bar{r}_1^c$ are distinct characters of $G_L$ which are conjugate by an
    element of $G_F$ (recall that $L/F$ is the unramified quadratic extension).  There is a lift
    $r_1$ of $\bar{r}_1$ to $\bar{E}^\times$.  Then $L(\bar{r}) = \{(r_1|_{I_F} \otimes \chi)\oplus
    (r_1^c|_{I_F} \otimes\chi^c)\}_\chi$ as $\chi$ runs over all characters $I_F \rarrow \bar{E}^\times$ which extend to
    $G_L$ and reduce to the trivial character.
  \end{enumerate}
\end{lemma}

\begin{proof}
  This follows from proposition \ref{simplest-case} below; the ingredients in the
  proof of that proposition are lemma \ref{lem:tame-reduction} (reduction to the tame case) and
  lemma \ref{twisting} (lifting ring of a character).
\end{proof}

\begin{lemma} \label{lem:K-type-reduction} If $\tau = (r,0)$ is an inertial type with
  $r|_{\tilde{P}_F}$ non-scalar, then $\bar{\sigma(\tau)}$ is irreducible.  If $\tau'$ is any other
  inertial type, then $\bar{\sigma(\tau')}$ contains $\bar{\sigma(\tau)}$ if and only if $\tau'\in
  L(\bar{r})$ (in which case $\bar{\sigma(\tau)} \cong \bar{\sigma(\tau')}$).
\end{lemma}

\begin{proof}
  These are the results of propositions \ref{prop:type_congruence_implies_K-type_congruence} and
  \ref{prop:K-type_congruence_implies_type_congruence}.
\end{proof}

If $\tau = (r,N)$ with $r|_{\tilde{P}_F}$ scalar, then $\bar{\sigma(\tau)}$ need not be irreducible.
We give the (well-known) analysis of these $\bar{\sigma(\tau)}$ in section
\ref{sec:reduction_tame_types}.  For now, we just give names to the following representations of
$GL_2(k_F)$ (and hence, by inflation, of $K$) over $\FF$:
\begin{itemize}
\item the trivial representation, $\mathbbm{1}$;
\item the Steinberg representation, $\St$ (irreducible if $q \not \equiv -1 \mod l$);
\item if $q \equiv -1 \mod l$, the cuspidal (but not supercuspidal) subrepresentation $\pi_1$ of $\St$.
\end{itemize}

\section{The `Breuil--M\'{e}zard conjecture'}
 \label{sec:breuil-mezard}
 Let $\bar{\rho} : G_F \rarrow GL_2(\FF)$ be a continuous representation, and
 suppose that $E$ is sufficiently large that:
 \begin{itemize}
 \item every subrepresentation of $\bar{\rho} \otimes \bar{\FF}$ is already defined over $\FF$;
 \item $E$ contains all of the $(q^2 - 1)$th roots of unity;
 \item for every $\tau \in L(\bar{\rho}|_{I_F})$, $\sigma(\tau)$ is defined over $E$.
 \end{itemize}
We state our analogue of the Breuil--M\'{e}zard conjecture when $l \neq p$.
 By lemma \ref{lem:dimension} and the fact that $R^\square(\rhobar,\tau)$ is defined to be $\Oc$-flat, we
 have
\[\dim \Rbarbox(\rhobar,\tau) \leq 4.\]
\begin{definition} \label{def:cycle-of-type}
  We associate to each type $\tau = (r,N)$ a cycle $\Cc(\rhobar,\tau)\in
  \Zc^4(\Rbarbox(\rhobar))$ as follows:
  \begin{itemize}
  \item if $N = 0$, set \[\Cc(\rhobar,\tau) = Z^4(\Rbarbox(\rhobar,\tau));\]
  \item if $N \neq 0$ (in which case $r$ must be scalar) let $\tau' = (r,0)$ and
    set \[\Cc(\rhobar,\tau) = Z^4(\Rbarbox(\rhobar,\tau)) +
    Z^4(\Rbarbox(\rhobar,\tau')).\]
  \end{itemize}
\end{definition}

 Then we have
 \begin{theorem}\label{thm:BM}
   For each irreducible $\bar{\FF}$-representation $\theta$ of $GL_2(\Oc_F)$, there is an effective cycle
   $\Cc(\rhobar,\theta) \in \Zc^4(\Rbarbox(\rhobar))$ such that, for any inertial type $\tau$,
   we have an equality of cycles
   \begin{equation}\label{eqn:BM}\Cc(\rhobar,\tau) = \sum_\theta
     m(\theta,\bar{\sigma(\tau)})\Cc(\rhobar,\theta)
   \end{equation}
   where $m(\theta,\bar{\sigma(\tau)})$ is the multiplicity of $\theta$ as a Jordan--H\"{o}lder
   factor of $\bar{\sigma(\tau)}$ and the sum runs over all $\theta$.
 \end{theorem}
 \begin{proof}
   We proceed case by case, using the results of section \ref{sec:reduction-types} and of sections
   \ref{sec:calculations} and \ref{sec:reduction_tame_types} below.

   Suppose that $\rhobar |_{\tilde{P}_F}$ is non-scalar.  Then by lemma \ref{lem:K-type-reduction},
   the representations $\bar{\sigma(\tau)}$ for $\tau \in L(\rhobar|_{I_F})$ are all
   irreducible and isomorphic to a common irreducible representation, which we call $\theta_0$.  By
   corollary \ref{cor:simplest-case}, $\Rbarbox(\rhobar)$ has a unique minimal prime, denoted
   $\af$, which has dimension 4.  So we have
   \[ \Zc^4(\Spec(\Rbarbox(\rhobar))) = \ZZ\cdot [\af].\]
   Define $\Cc(\rhobar,\theta_0) = [\af]$, and $\Cc(\rhobar,\theta) = 0$ for $\theta \neq \theta_0$.  By corollary \ref{cor:simplest-case}, 
   \[ \Cc(\rhobar,\tau) = [\af] = \Cc(\rhobar,\theta_0)\] 
   if $\tau \in L(\rhobar|_{\tilde{P}_F})$ and \[\Cc(\rhobar,\tau) = 0\] otherwise.  In other
   words, for all $\tau$ we have 
   \[\Cc(\rhobar,\tau) = \sum_{\theta} m(\theta,\bar{\sigma(\tau)})\Cc(\rhobar,\theta)\]
   as required.
   
   If $\rhobar|_{\tilde{P}_F}$ is scalar, then we may twist $\rhobar$ by a character of $G_F$ and
   apply lemma \ref{twisting} and so \emph{suppose for the rest of the proof that $\rhobar|_{\tilde{P}_F}$ is trivial}.

   If $q \neq \pm 1 \bmod l$, then $L(\rhobar|_{I_F}) \subset \{\tau_{1,s},\tau_{1,ns}\}$.  By the
   discussion of section \ref{sec:reduction_tame_types}, we have that \[\bar{\sigma(\tau_{1,s})} =
   \mathbbm{1}\] and
   \[\bar{\sigma(\tau_{1,ns})} = \St\] are irreducible and non-isomorphic, and that neither is a Jordan--H\"{o}lder
   factor of any other $\bar{\sigma(\tau)}$.  So the fact that we \emph{can} define the
   $\Cc(\rhobar,\theta)$ so as to satisfy equation \eqref{eqn:BM} is a triviality, as there are no
   relations amongst the $\bar{\sigma(\tau)}$ for different $\tau$.  We work out what the
   $\Cc(\rhobar,\theta)$ are explicitly: for $\theta \neq \mathbbm{1}$ or $\St$ we define
   $\Cc(\rhobar,\theta) = 0$.  Otherwise, there are four cases to consider:
   \begin{itemize}
   \item if $\rhobar(\phi)$ has eigenvalues with ratio not in $\{1, \pm q\}$ then by proposition
     \ref{no-extensions} there is a unique minimal prime $\af_{nr}$ of $\Rbarbox(\rhobar)$.
     In this case, define \begin{align*}\Cc(\rhobar,\mathbbm{1}) &= [\af_{nr}] \\ \Cc(\rhobar,\St) &= [\af_{nr}];\end{align*}
   \item if $\rhobar$ is an extension of the trivial character by itself then by proposition
     \ref{banal} part 1 there is a unique minimal prime $\af_{nr}$ of $\Rbarbox(\rhobar)$.
     In this case, define \begin{align*}\Cc(\rhobar,\mathbbm{1}) &= [\af_{nr}] \\ \Cc(\rhobar,\St) &=
       [\af_{nr}];\end{align*}
  \item if $\rhobar$ is a non-split extension of the trivial character by the cyclotomic character
     then by proposition \ref{banal} part 2 there is a unique minimal prime $\af_{N}$ of
     $\Rbarbox(\rhobar)$. In this case, define \begin{align*}\Cc(\rhobar,\mathbbm{1}) &= 0 \\ \Cc(\rhobar,\St) &= [\af_{N}];\end{align*}
   \item if $\rhobar$ is the direct sum of the trivial character and the cyclotomic character then
     by proposition \ref{banal} part 2 there are two minimal primes of $\Rbarbox(\rhobar)$, denoted
     there by $\af_{nr}$ and $\af_N$.  In this case, define \begin{align*}\Cc(\rhobar,\mathbbm{1})
       &= [\af_{nr}] \\ \Cc(\rhobar,\St) &= [\af_{nr}] + [\af_{N}].\end{align*}
   \end{itemize}
   It is then easy to verify that equation \eqref{eqn:BM} holds; we just do the last case.  We see
   from proposition \ref{banal} part 2 that
   \begin{alignat*}{2}
     \Cc(\rhobar,\tau_{1,s}) &= [\af_{nr}] &= \Cc(\rhobar,\mathbbm{1}) \\
     \Cc(\rhobar,\tau_{1,ns}) &= [\af_{nr}] + [\af_N] &= \Cc(\rhobar,\St)
   \end{alignat*}
   and $\Cc(\rhobar,\tau) = 0$ for all other $\tau$, exactly as required by equation \eqref{eqn:BM}.
   
   If $q = -1 \bmod l$, then $L(\rhobar|_{I_F}) \subset
   \bigcup_{\xi}\{\tau_{1,s},\tau_{1,ns},\tau_{\xi}\}$ for $\xi$ a non-trivial $l^b$th root of
   unity.  By the discussion of section \ref{sec:reduction_tame_types}, we have that
\begin{align*}\bar{\sigma(\tau_{1,s})} &= \mathbbm{1},\\ 
  \bar{\sigma(\tau_{\xi})} &= \pi_1,\\
  \intertext{and}
   \bar{\sigma(\tau_{1,ns})}\,^{ss} &= \mathbbm{1} \oplus \pi_1
 \end{align*} where $\mathbbm{1}$ and $\pi_1$ are irreducible and non-isomorphic, and are not
 Jordan--H\"{o}lder factors of any other $\bar{\sigma(\tau)}$. For $\theta \neq \mathbbm{1}$ or
 $\pi_1$ we define $\Cc(\rhobar,\theta) = 0$.  Otherwise, there are four cases to consider:
   \begin{itemize}
   \item if $\rhobar(\phi)$ has eigenvalues with ratio not in $\{\pm 1\}$ then by proposition
     \ref{no-extensions} there is a unique minimal prime $\af_{nr}$ of $\Rbarbox(\rhobar)$.
     In this case, define \begin{align*}\Cc(\rhobar,\mathbbm{1}) &= [\af_{nr}] \\ \Cc(\rhobar,\pi_1) &= 0;\end{align*}
   \item if $\rhobar$ is an extension of the trivial character by itself then by proposition
     \ref{q+1} part 1 there is a unique minimal prime $\af_{nr}$ of $\Rbarbox(\rhobar)$.
     In this case, define \begin{align*}\Cc(\rhobar,\mathbbm{1}) &= [\af_{nr}] \\ \Cc(\rhobar,\pi_1) &=
       0;\end{align*}
   \item if $\rhobar$ is a non-split extension of the trivial character by the cyclotomic character
     then by proposition \ref{q+1} part 2a there is a unique minimal prime, denoted $\af_{N}$ in
     that proposition, of $\Rbarbox(\rhobar,\tau_{1,ns})$, which we regard as a prime of
     $\Rbarbox(\rhobar)$. In this case, define \begin{align*}\Cc(\rhobar,\mathbbm{1}) &= 0 \\
       \Cc(\rhobar,\pi_1) &= [\af_{N}];\end{align*}
   \item if $\rhobar$ is the direct sum of the trivial character by the cyclotomic character then in
     proposition \ref{q+1} part 2b three four-dimensional primes of $\Rbarbox(\rhobar)$ are
     defined, denoted there $\af_{nr}$, $\af_N$ and $\af_{N'}$.
     In this case, define \begin{align*}\Cc(\rhobar,\mathbbm{1}) &=
       [\af_{nr}] \\ \Cc(\rhobar,\pi_1) &= [\af_{N}] + [\af_{N'}].\end{align*}
   \end{itemize}
   It is then easy to verify that equation \eqref{eqn:BM} holds using proposition
   \ref{no-extensions} in the first case and proposition \ref{q+1} parts 1, 2a, and 2b in the second,
   third, and fourth cases; again we just do the fourth case, which is the most complicated.
   Equation \eqref{eqn:BM} is equivalent to the equations:
   \begin{alignat*}{2}
\Cc(\rhobar,\tau_{1,s}) &= \Cc(\rhobar,\mathbbm{1}) &=& [\af_{nr}]  \\
\Cc(\rhobar,\tau_{1,ns}) &= \Cc(\rhobar,\mathbbm{1}) + \Cc(\rhobar,\pi_1) &=& [\af_{nr}] + [\af_{N}] + [\af_{N'}]  \\
\Cc(\rhobar,\tau_{\xi}) &= \Cc(\rhobar,\pi_1)    &=& [\af_{N}] + [\af_{N'}]\\
\intertext{and}
\Cc(\rhobar,\tau) &= 0 && 
   \end{alignat*}
   if $\tau \not \in\bigcup_\xi\{\tau_{1,s},\tau_{1,ns}, \tau_{\xi}\}$.  But by proposition \ref{q+1} part 2b we have:
\begin{alignat*}{2}
\Cc(\rhobar,\tau_{1,s}) &= Z^4(\bar{R}(\rhobar,\tau_{1,s})) &=& [\af_{nr}]  \\
\Cc(\rhobar,\tau_{1,ns}) &= Z^4(\bar{R}(\rhobar,\tau_{1,s})) + Z^4(\bar{R}(\rhobar,\tau_{1,ns})) &=& [\af_{nr}] + [\af_{N}] + [\af_{N'}]  \\
\Cc(\rhobar,\tau_{\xi}) &= Z^4(\bar{R}(\rhobar,\tau_\xi))   &=& [\af_{N}] + [\af_{N'}]\\
\intertext{and}
\Cc(\rhobar,\tau) &= 0 &&
   \end{alignat*}
   if $\tau \not \in \bigcup_\xi\{\tau_{1,s},\tau_{1,ns}, \tau_{\xi}\}$, as required.

   If $q = 1 \bmod l$, then $L(\rhobar|_{I_F}) \subset
   \bigcup_{\zeta,\zeta_1,\zeta_2}\{\tau_{\zeta,s},\tau_{\zeta,ns},\tau_{\zeta_1,\zeta_2}\}$ for
   $\zeta$, $\zeta_1$ and $\zeta_2$ (possibly trivial) $l^a$th roots of unity with $\zeta_1 \neq
   \zeta_2$.  By the discussion of section \ref{sec:reduction_tame_types}, we have that
\begin{align*}\bar{\sigma(\tau_{\zeta,s})} &= \mathbbm{1},\\ 
  \bar{\sigma(\tau_{\zeta,ns})} &= \St,\\
  \intertext{and}
   \bar{\sigma(\tau_{\zeta_1,\zeta_2})} &= \mathbbm{1} \oplus \St
 \end{align*} where $\mathbbm{1}$ and $\St$ are irreducible and non-isomorphic, and are not
 Jordan--H\"{o}lder factors of any other $\bar{\sigma(\tau)}$. For $\theta \neq \mathbbm{1}$ or
 $\St$ we define $\Cc(\rhobar,\theta) = 0$.  Otherwise, there are four cases to consider:
   \begin{itemize}
   \item if $\rhobar(\phi)$ has eigenvalues with ratio not in $\{\pm 1\}$ then by proposition
     \ref{no-extensions} there is a unique minimal prime $\af_{nr}$ of $\Rbarbox(\rhobar)$.
     In this case, define \begin{align*}\Cc(\rhobar,\mathbbm{1}) &= [\af_{nr}] \\ \Cc(\rhobar,\St) &= [\af_{nr}];\end{align*}
   \item if $\rhobar$ is a ramified extension of the trivial character by itself then by proposition
     \ref{q-1} part 1 there is a unique minimal prime $\af_{N}$ of
     $\Rbarbox(\rhobar,\tau_{1,ns})$ which we regard as a four-dimensional prime of
     $R^\square(\rhobar)$.  In this case, define \begin{align*}\Cc(\rhobar,\mathbbm{1}) &= 0 \\ \Cc(\rhobar,\St)
       &= [\af_N];\end{align*}
   \item if $\rhobar$ is a unramified extension of the trivial character by itself then by
     proposition \ref{q-1} parts 2 and 3 there are four-dimensional primes of
     $\Rbarbox(\rhobar)$ which are denoted there by $[\af_{nr}]$ and $[\af_N]$.
     In this case, define \begin{align*}\Cc(\rhobar,\mathbbm{1}) &= [\af_{nr}] \\
       \Cc(\rhobar,\St) &= [\af_{nr}] + [\af_{N}].\end{align*}
   \end{itemize}
   It is then easy to verify that equation \eqref{eqn:BM} holds using proposition
   \ref{no-extensions} in the first case, proposition \ref{q-1} part 1 in the second case, and
   proposition \ref{q-1} parts 2 and 3 in the third case (according as $\rhobar$ is split or not);
   again we just do the third case, which is the most complicated.  Equation \eqref{eqn:BM} is
   equivalent to the equations:
   \begin{alignat*}{2}
     \Cc(\rhobar,\tau_{\zeta,s}) &= \Cc(\rhobar,\mathbbm{1}) &=& [\af_{nr}]  \\
     \Cc(\rhobar,\tau_{\zeta,ns}) &= \Cc(\rhobar,\St) &=& [\af_{nr}] + [\af_{N}] \\
     \Cc(\rhobar,\tau_{\zeta_1,\zeta_2}) &= \Cc(\rhobar,\mathbbm{1})+ \Cc(\rhobar,\St)    &=& [\af_{nr}] + [\af_{nr}] + [\af_N]\\
     \intertext{and} \Cc(\rhobar,\tau) &= 0 &&
   \end{alignat*}
   if $\tau \not \in
   \bigcup_{\zeta,\zeta_1,\zeta_2}\{\tau_{\zeta,s},\tau_{\zeta,ns},\tau_{\zeta_1,\zeta_2}\}$.  But by
   proposition \ref{q-1} parts 2 and 3 we have:
\begin{alignat*}{2}
\Cc(\rhobar,\tau_{\zeta,s}) &= Z^4(\bar{R}(\rhobar,\tau_{\zeta,s})) &=& [\af_{nr}]  \\
\Cc(\rhobar,\tau_{\zeta,ns}) &= Z^4(\bar{R}(\rhobar,\tau_{\zeta,s})) + Z^4(\bar{R}(\rhobar,\tau_{1,ns})) &=& [\af_{nr}] + [\af_{N}] \\
\Cc(\rhobar,\tau_{\zeta_1,\zeta_2}) &= Z^4(\bar{R}(\rhobar,\tau_{\zeta_1,\zeta_2}))   &=&
2[\af_{nr}] + [\af_N]\\
\intertext{and}
\Cc(\rhobar,\tau) &= 0 &&
   \end{alignat*}
   if $\tau \not \in
   \bigcup_{\zeta,\zeta_1,\zeta_2}\{\tau_{\zeta,s},\tau_{\zeta,ns},\tau_{\zeta_1,\zeta_2}\}$, as required.     
 \end{proof}
 \begin{remark}
   Although the definition of $\Cc(\rhobar,\tau)$ may seem ad-hoc, it in fact has the following
   natural interpretation: it is the reduction modulo $\lambda$ of the cycle in
   $\Zc^4(R^\square(\rhobar))$ obtained by taking the Zariski closure of the closed points $x \in
   \Spec R^\square(\rhobar)[1/l]$ such that $\rec^{-1}(\rho_x)|_K$ contains $\sigma(\tau)$.
 \end{remark}
 \begin{remark}
   We conjecture that the theorem remains true when $l=2$.  
 \end{remark}

\section{Calculations}
\label{sec:calculations}

Let $\rhobar : G_F \rarrow GL_2(\FF)$ be a continuous representation.  The aims of this section are
to give explicit presentations for the rings $R^\square(\rhobar, \tau)$ and to compute the cycles
$Z(\Rbarbox(\rhobar,\tau))\in \Zc^4(\Spec \Rbarbox(\rhobar))$.  We continue to assume that $E$ is
sufficiently large, as defined at the start of the previous section.

\subsection{Simple cases.}

When $\rhobar |_{\tilde{P}_F}$ is not scalar, then lemma
\ref{lem:tame-reduction} allows us to determine the universal framed deformation rings.  Recall
that if $\bar{r} : I_F \rarrow GL_2(\FF)$ is a representation that extends to $G_F$ then we have
defined the set $L(\bar{r})$ of types that lift $\bar{r}$.  

\begin{proposition} \label{simplest-case} If $\rhobar|_{\tilde{P}_F}$ is irreducible, then
  \[R^\square(\rhobar) \cong \Oc[[X,Y,Z_1,Z_2,Z_3]]/((1+X)^{l^a}-1). \] The $l^a$ irreducible
  components of $\Spec R^\square(\rhobar)$ are precisely the $\Spec R^\square(\rhobar,\tau)$ for
  $\tau \in L(\rhobar|_{I_F})$.

  If $\rhobar|_{\tilde{P}_F}$ is a sum of distinct characters which extend to $G_F$, then
  \[R^\square(\rhobar) \cong \Oc[[X_1,X_2,Y_1,Y_2,Z_1,Z_2]]/((1+X_1)^{l^a}-1,(1+X_2)^{l^a}-1).\] The
  $l^{2a}$ irreducible components of $\Spec R^\square(\rhobar)$ are precisely the $\Spec R^\square(\rhobar,\tau)$ for $\tau  \in L(\rhobar|_{I_F})$.

  If $\rhobar|_{\tilde{P}_F}$ is a sum of distinct characters which are conjugate by the non-trivial
  element of $G_L\setminus G_F$, then
  \[R^\square(\rhobar ) \cong \Oc[[X,Y,Z_1,Z_2,Z_3]]/((1+X)^{l^b} - 1).\] The $l^{b}$ irreducible
  components of $\Spec R^\square(\rhobar)$ are precisely the $\Spec R^\square(\rhobar,\tau)$ for $\tau  \in
  L(\rhobar|_{I_F})$.
\end{proposition}
\begin{proof}
  This follows straightforwardly from lemma \ref{lem:tame-reduction}.  Suppose first that $\rhobar
  |_{\tilde{P}_F}$ is irreducible.  Then there is a unique irreducible representation $\theta$ of
  $\tilde{P}_F$ such that $\rhobar_\theta$ (in the notation of lemma \ref{lem:tame-reduction}) is
  non-zero.  For that $\theta$, $\rhobar_\theta$ is an unramified one-dimensional representation of $G_F$.  So
  by lemmas \ref{lem:tame-reduction} and \ref{character-deformation}:
  \[R^\square(\rhobar) \cong R^\square(\rhobar_\theta)[[Z_1,Z_2,Z_3]] \cong \Oc[[X,Y,Z_1,Z_2,Z_3]]/((1+X)^{l^a}-1).\] We have $\rho^\square \cong
  \tilde{\theta} \otimes \chi^\square$ where $\chi^\square$ is the universal character $G_F \rarrow
  R^\square(\rhobar_\theta)^\times$.

  Suppose now that $\rhobar|_{\tilde{P}_F} = \theta_1 \oplus \theta_2$ for distinct characters $\theta_1$ and
  $\theta_2$.  Suppose first that the $\theta_i$ are not $G_F$-conjugate.  As in lemma
  \ref{lem:tame-reduction}, we pick $\Oc$-characters $\tilde{\theta}_1$ and $\tilde{\theta}_2$ of
  $G_F$ lifting and extending $\theta_1$ and $\theta_2$.  Then (in the notation of lemma
  \ref{lem:tame-reduction}) $\rhobar_{\theta_1}$ and $\rhobar_{\theta_2}$ are both unramified
  characters.  By lemmas \ref{lem:tame-reduction} and \ref{character-deformation}:
  \begin{align*}R^\square(\rhobar) &\cong \left(R^\square(\rhobar_{\theta_1}) \hat{\otimes}
      R^\square(\rhobar_{\theta_2})\right)[[Z_1,Z_2]]\\
    & \cong \Oc[[X_1,X_2,Y_1,Y_1,Z_1,Z_2]]/((1+X_1)^{l^a}-1,(1+X_2)^{l^a}-1).
\end{align*}
We have \[\rho^\square \cong \tilde{\theta}_1 \otimes \chi^\square_1 \oplus \tilde{\theta}_2 \otimes
\chi^\square_2\] where each $\chi^\square_i$ is the universal character over
$R^\square(\rhobar_{\theta_i})$.

  Suppose finally that $\theta_1$ and $\theta_2$ are $G_F$-conjugate.  We take $\theta =
  \theta_1$; then $G_\theta = G_L$ where $L$ is a quadratic extension of $F$.  In fact, since
  $\tilde{P}_F \subset G_L$ and $l$ is odd, we must have that $G_L$ is the unramified quadratic
  extension of $F$.  As in lemma
  \ref{lem:tame-reduction}, pick an $\Oc$-character $\tilde{\theta}$ of
  $G_L$ lifting and extending $\theta$.  Then (in the notation of lemma
  \ref{lem:tame-reduction}) $\rhobar_{\theta}$ is an unramified
  character of $G_L$.  By lemmas \ref{lem:tame-reduction} and \ref{character-deformation}:
\begin{align*}R^\square(\rhobar) &\cong R^\square(\rhobar_{\theta})[[Z_1,Z_2,Z_3]]\\
& \cong \Oc[[X,Y,Z_1,Z_2,Z_3]]/((1+X)^{l^b} - 1),
\end{align*}
since $v_l(q^2 - 1) = l^b$.  We have \[\rho^\square \cong \Ind_{G_L}^{G_F}\left(\tilde{\theta}
  \otimes \chi^\square\right) \] where $\chi^\square$ is the universal character over
$R^\square(\rhobar_{\theta})$.

We show that $f:\Spec(R^\square(\rhobar,\tau))\mapsto \tau$ is a bijection from the set of irreducible
components of $\Spec(R^\square(\rhobar))$ to $L(\rhobar|_{I_F})$.  It is easy to see that $f$ is
an injection (from our explicit expressions for $\rho^\square$).  The type of the $\bar{E}$-points
of $\Spec(R^\square(\rhobar,\tau))$ is constant on irreducible components, so to show that a
particular $\tau$ is in the image of $f$ it suffices to produce a lift of $\rhobar$ to $\bar{E}$
of type $\tau$.  Each $\tau \in L(\rhobar|_{I_F})$ is, by definition, the type of a lift of
some $\rhobar'$ with $\rhobar'|_{I_F} \cong \rhobar|_{I_F}$.  But it is clear from the calculations
above that the image of $f$ only depends on $\rhobar|_{I_F}$, and so $f$ is surjective as required.  \end{proof}

\begin{corollary} \label{cor:simplest-case} If $\rhobar |_{\tilde{P}_F}$ is not scalar, then
  $\Rbarbox(\rhobar)$ has a unique minimal prime $\mathfrak{a}$, which has dimension 4.
  For $\tau$ an inertial type we have that \[Z^4(\Rbarbox(\rhobar,\tau)) = [\mathfrak{a}]\]
  if $\tau \in L(\bar{\rho}|_{\tilde{P}_F})$ and $Z^4(\Rbarbox(\rhobar, \tau))=0$ otherwise.
\end{corollary}

We may now assume that $\rhobar |_{\tilde{P}_F}$ is scalar; after a twist (invoking
\cite{ClozelHarrisTaylor2008-Automorphy} lemma 2.4.11 to extend the character occurring in
$\rhobar|_{\tilde{P}_F}$ to the whole Galois group), we may assume that $\rhobar|_{\tilde{P}_F}$ is
\emph{trivial}, so that any lift of $\rhobar |_{\tilde{P}_F}$ is also trivial. In this case, then,
$\rhobar |_{I_F}$ is inflated from a representation of the (procyclic) pro-$l$ group
$I_F/\tilde{P}_F$ over a field of characteristic $l$.  Any irreducible representation in
characteristic $l$ of an $l$-group is trivial, and so $\rhobar|_{I_F}$ must be an extension of the
trivial representation by the trivial representation.  Now, because $\phi \sigma \phi^{-1} =
\sigma^q$, $\rhobar(\phi)$ maps the subspace of fixed vectors of $\rhobar(\sigma)$ to itself;
therefore $\rhobar$ must be an extension of unramified characters.  That is, there is a short exact
sequence
\[0 \rarrow \chi_1 \rarrow \rhobar \rarrow \chi_2 \rarrow 0\]
for unramified characters $\chi_1$ and $\chi_2$.  Such an extension corresponds to an element of
$H^1(G_F, \chi_1\chi_2^{-1})$; by a simple calculation with the local Euler characteristic formula
and local Tate duality, this cohomology group is non-zero if and only if $\chi_1 = \chi_2$ or
$\chi_1 = \chi_2 \epsilon$.  So we can easily deal with the case where neither of these two
possibilities can occur.

\begin{proposition} \label{no-extensions} Suppose that $\rhobar |_{\tilde{P}_F}$ is trivial and that
  $\rhobar(\phi)$ has eigenvalues $\bar{\alpha},\bar{\beta}\in\FF$ with
  $\bar{\alpha}/\bar{\beta} \not\in \{1 ,q, q^{-1}\}$.  Then
  \[R^\square(\rhobar) \cong \frac{\Oc[[A,B,P,Q,X,Y]]}{((1 + P)^{l^a} - 1,(1+Q)^{l^a}-1)},\]
and $\rho^\square(\sigma)$ is diagonalizable with eigenvalues $1+P$ and $1+Q$.

  For $\zeta$ an $l^a$th root of unity (possibly equal to 1), we have
  that \begin{align*}R^\square(\rhobar,\tau_{\zeta,s})&=  \Oc[[A,B,P,Q,X,Y]]/(1+P-\zeta,1+Q-\zeta)\\
&\cong \Oc[[A,B,X,Y]]
\end{align*}
is formally smooth of relative dimension 4 over $\Oc$, and $R^\square(\rhobar,\tau_{\zeta,ns}) = 0$.
If $q = 1 \mod l$ and $\zeta_1,\zeta_2$ are distinct $l^a$th roots of unity, then
  \begin{align*}
    R^\square(\rhobar,\tau_{\zeta_1,\zeta_2}) &= \frac{\Oc[[A,B,P,Q,X,Y]]}{(2 + P + Q - \zeta_1 - \zeta_2,
      PQ - (\zeta_1-1)(\zeta_2 - 1))}\\
& \cong \Oc[[A,B,P,X,Y]]/(1 + P - \zeta_1)(1 + P - \zeta_2).
  \end{align*}

  For all other $\tau$, $R^\square(\rhobar,\tau) = 0$.   

  The ideal $\af_{nr}$ defining $\Rbarbox(\rhobar,\tau_{1,s})$ is the unique minimal prime of
  $\Rbarbox(\rhobar)$.  We have: 
  \[Z^4(\Rbarbox(\rhobar,\tau)) =
  \begin{cases}
    [\af_{nr}] &\text{if $\tau = \tau_{\zeta,s}$}\\
    2[\af_{nr}]& \text{if $\tau = \tau_{\zeta_1,\zeta_2}$}\\
    0 & \text{if $\tau = \tau_{\zeta,ns}$}.
  \end{cases}\]
 \end{proposition}

\begin{proof}
  First note that, by the above cohomology calculation, $\rhobar(\sigma)$ must be trivial.

  Let $\alpha$ and $\beta$ be lifts of $\bar{\alpha}$ and $\bar{\beta}$ to $\Oc$.  Suppose that
  $\mathcal{A}$ is an object of $\Cc_\Oc$ and that $M$ is a free $\mathcal{A}$-module of rank 2 with
  a continuous action of $G_F$ given by $\rho : G_F \rarrow \Aut_{\mathcal{A}}(M)$, reducing to
  $\rhobar$ modulo $\mf_\mathcal{A}$.  Suppose that the characteristic polynomial of $\rho(\phi)$ is
  $(X-\alpha-A)(X-\beta - B)$, where $A,B \in \mf_\mathcal{A}$ -- note that by Hensel's lemma the
  characteristic polynomial \emph{does} have roots in $\mathcal{A}$ reducing to $\bar{\alpha}$ and
  $\bar{\beta}$.  Then there is a decomposition \[M = (\rho(\phi)-\alpha-A)M \oplus
  (\rho(\phi)-\beta-B)M.\] Here it is crucial that $\alpha + A$, $\beta + B$ and $\alpha - \beta + A
  - B$ are all invertible in $\mathcal{A}$.  If $\bar{v}_\alpha,\bar{v}_\beta$ is a basis of
  eigenvectors of $\rhobar(\phi)$ in $M\otimes \FF$ and $v_\alpha, v_\beta$ is a basis of $M$
  lifting $\bar{v}_\alpha,\bar{v}_\beta$ then there are unique $X,Y \in \mf_\mathcal{A}$ such that
  $v_\alpha + X v_\beta, v_\beta + Y v_\alpha$ are eigenvectors of $\rho(\phi)$.  Moreover,
  replacing $(v_\alpha,v_\beta)$ by $(\mu v_\alpha,\mu v_\beta)$ for $\mu \in 1 + \mf_{\Ac}$ does
  not change $X$ and $Y$.

  Therefore we may assume that $\rhobar(\phi) = \twomat{\bar{\alpha}}{0}{0}{\bar{\beta}}$ and that
  \begin{eqnarray*}
    \rho(\phi)  &= \twomat{1}{X}{Y}{1}^{-1}\twomat{\alpha + A}{0}{0}{\beta + B} \twomat{1}{X}{Y}{1} \\
    \rho(\sigma) &= \twomat{1}{X}{Y}{1}^{-1}\twomat{1 + P}{R}{S}{1+Q} \twomat{1}{X}{Y}{1}
  \end{eqnarray*}
  where $X,Y,P,R,S,Q \in\mf_\mathcal{A}$ are uniquely determined by $\rho$.  The equation $\phi \sigma \phi^{-1} = \sigma^q$ implies
  that
  \[ \twomat{\alpha + A}{0}{0}{\beta + B} \twomat{1 + P}{R}{S}{1+Q} \twomat{\alpha + A}{0}{0}{\beta
    + B}^{-1} = \twomat{1 + P}{R}{S}{1+Q}^q.\]

  Looking at the top right and bottom left entries gives that $R = S = 0$.  Then looking at the
  diagonal entries gives that $(1+P)^{q-1} = (1+Q)^{q-1} = 1$, which is equivalent to $(1+P)^{l^a} =
  (1+Q)^{l^a} = 1$.  Thus \[R^\square(\rhobar) \cong \frac{\Oc[[A,B,P,Q,X,Y]]}{((1 + P)^{l^a} -
    1,(1+Q)^{l^a}-1)}.\] The possible inertial types are $\tau_{\zeta,s}$ and
  $\tau_{\zeta_1,\zeta_2}$ ($\tau_{\zeta,ns}$ cannot occur since all lifts are
  diagonalisable). Clearly $R^\square(\rhobar,\tau_{\zeta,s})$ is defined by the equations $1+P =
  1+Q = \zeta$.  The ring $R^\square(\rhobar,\tau_{\zeta_1,\zeta_2})^\circ$ is cut out by the
  equations $2 + P + Q = \zeta_1 + \zeta_2$, $(1+P)(1+Q) = \zeta_1\zeta_2$ and the redundant
  equations $(1+P)^{l^a} = (1+Q)^{l^a} = 1$.  But \[R^\square(\rhobar,\tau_{\zeta_1,\zeta_2})^\circ
  \cong \Oc[[A,B,P,X,Y]]/((1+P-\zeta_1)(1+P-\zeta_2))\] is reduced and $\lambda$-torsion
  free and so is equal to $R^\square(\rhobar,\tau_{\zeta_1,\zeta_2}).$
  
  For the reduction modulo $\lambda$, simply note that: 
  \begin{align*}
    \Rbarbox(\rhobar) &= \FF[[A,B,P,Q,X,Y]]/(P^{l^a},Q^{l^a}) \\
    \Rbarbox(\rhobar,\tau_{\zeta,s}) &= \FF[[A,B,P,Q,X,Y]]/(P,Q) \\
    \intertext{and}
  \Rbarbox(\rhobar,\tau_{\zeta_1,\zeta_2}) &= \FF[[A,B,P,Q,X,Y]]/(P^2,Q^2,P+Q).
  \end{align*}
  So $\af_{nr} = (P,Q)$ is the unique minimal prime of $\Rbarbox(\rhobar)$ and the
  multiplicities are as claimed.
\end{proof}

We extract one part of the proof of this proposition for future use:

\begin{lemma} \label{diagonalization} If $\rhobar(\phi)$ has distinct eigenvalues, we may assume
  that it is diagonal. In that case, there exists a unique matrix $\twomat{1}{X}{Y}{1} \in
  GL_2(R^\square(\rhobar))$, reducing to the identity modulo the maximal ideal, such that
  $\rho^\square(\phi) = \twomat{1}{X}{Y}{1}^{-1} \Phi \twomat{1}{X}{Y}{1}$ for a \emph{diagonal}
  matrix $\Phi$.
\end{lemma}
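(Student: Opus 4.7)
The plan is to find $X$ and $Y$ explicitly as unique roots of quadratic Hensel's-lemma equations, after first arranging by a basis change that $\rhobar(\phi)$ itself is diagonal. The reduction to the diagonal case is immediate: since $\rhobar(\phi)$ has distinct eigenvalues in $\FF$, a suitable element of $GL_2(\FF)$ conjugates it into diagonal form, and replacing $\rhobar$ by its conjugate replaces $R^\square(\rhobar)$ and $\rho^\square$ by canonically isomorphic data. So I may assume $\rhobar(\phi) = \twomat{\bar{\alpha}}{0}{0}{\bar{\beta}}$ with $\bar{\alpha} \neq \bar{\beta}$.

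Now write $\rho^\square(\phi) = \twomat{a}{b}{c}{d}$ where $a,d \in R^\square(\rhobar)$ lift $\bar{\alpha},\bar{\beta}$ and $b,c \in \mathfrak{m}$. Note $a-d$ is a unit (its reduction is nonzero). Rather than computing the conjugation directly, I would observe that the condition $\rho^\square(\phi) = \twomat{1}{X}{Y}{1}^{-1} \Phi \twomat{1}{X}{Y}{1}$ with $\Phi$ diagonal is equivalent to requiring that the two columns of $\twomat{1}{X}{Y}{1}$ be eigenvectors of $\rho^\square(\phi)$. Writing out $\rho^\square(\phi) \twomat{1}{Y} = \alpha \twomat{1}{Y}$ and eliminating the eigenvalue $\alpha = a + bY$, one finds
\[ bY^2 + (a-d)Y - c = 0. \]
Since $b,c \in \mathfrak{m}$ and $a-d$ is a unit, Hensel's lemma in the complete local ring $R^\square(\rhobar)$ yields a unique solution $Y \in \mathfrak{m}$ reducing to $0$. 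The second column gives, by the symmetric calculation, a unique $X \in \mathfrak{m}$ satisfying $cX^2 + (d-a)X - b = 0$. Finally $\twomat{1}{X}{Y}{1}$ lies in $GL_2(R^\square(\rhobar))$ because its determinant $1 - XY$ reduces to $1 \in \FF^\times$.

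There is no serious obstacle here: the argument is a direct application of Hensel's lemma, and the uniqueness part of Hensel gives the uniqueness claim in the statement. The only thing worth being slightly careful about is that the change of basis used to diagonalise $\rhobar(\phi)$ must be lifted implicitly throughout the rest of section \ref{sec:calculations}, as subsequent constructions depend on the particular presentation of $\rho^\square$.
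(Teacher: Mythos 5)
Your proof is correct, but it reaches the diagonalisation by a slightly different route than the paper. The paper's proof of this lemma is simply a pointer to the first half of the proof of proposition \ref{no-extensions}: there one applies Hensel's lemma to the \emph{characteristic polynomial} of $\rho(\phi)$ to produce roots $\alpha+A$, $\beta+B$ in the (complete local) coefficient ring, and then splits the free module as $M = (\rho(\phi)-\alpha-A)M \oplus (\rho(\phi)-\beta-B)M$, the key point being the invertibility of $\alpha-\beta+A-B$; the change-of-basis matrix adapted to this splitting is the required $\twomat{1}{X}{Y}{1}$. You instead apply Hensel's lemma directly to the quadratic equations satisfied by the coordinates of the eigenvectors, $bY^2+(a-d)Y-c=0$ and its mirror image, using that $a-d$ is a unit because the residual eigenvalues are distinct. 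The two arguments hinge on exactly the same invertibility, but yours has the small advantage of making the uniqueness of $X$ and $Y$ completely explicit (it is the uniqueness in Hensel's lemma), whereas the paper's module-theoretic splitting packages existence and functoriality more cleanly and is the form actually reused in the rest of the proof of proposition \ref{no-extensions}. One small bookkeeping slip: with the conjugation written as $\rho^\square(\phi)=\twomat{1}{X}{Y}{1}^{-1}\Phi\twomat{1}{X}{Y}{1}$, it is the columns of $\twomat{1}{X}{Y}{1}^{-1}$ (equivalently, of $\twomat{1}{-X}{-Y}{1}$ after clearing the determinant scalar) that are eigenvectors of $\rho^\square(\phi)$, not those of $\twomat{1}{X}{Y}{1}$; this only changes the signs in your quadratics and affects neither existence nor uniqueness, but it should be stated correctly.
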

\begin{proof}
  This is simply the first half of the proof of the previous proposition.
\end{proof}

\subsection{$q\neq \pm 1 \bmod l$}

Suppose that $q \neq \pm 1 \bmod l$.  By lemma \ref{no-extensions}, we have already dealt with the
cases in which the eigenvalues of $\rhobar(\phi)$ are not in the ratio $1$ or $q^{\pm 1}$.  All
other cases are dealt with by the following (after twisting and conjugating $\rhobar$).  Note that,
by lemma~\ref{lem:type-reduction}, the only possible types when $\rhobar |_{\tilde{P}_F}$ is trivial
are $\tau_{1,s}$ and $\tau_{1,ns}$.

\begin{proposition}\label{banal} Suppose that $q \neq \pm 1 \mod l$, and that $\rhobar |_{\tilde{P}_F}$
  is trivial.  Then
  \begin{enumerate}

  \item Suppose that $\rhobar(\sigma)$ is trivial, and that $\rhobar(\phi) =
    \twomat{1}{\bar{y}}{0}{1}$ for $\bar{y} \in \FF$.  Then $R^\square(\rhobar, \tau_{1,s}) =
    R^\square(\rhobar)$ is formally smooth of relative dimension 4 over $\Oc$, while
    $R^\square(\rhobar,\tau_{1,ns}) = 0$.

  \item Suppose that $\rhobar(\sigma) = \twomat{1}{\bar{x}}{0}{1}$ and $\rhobar(\phi) =
    \twomat{q}{0}{0}{1}$.

    If $\bar{x}\neq 0$, then $R^\square(\rhobar,\tau_{1,ns}) = R^\square(\rhobar)$ is formally smooth
    of relative dimension 4 over $\Oc$, while $R^\square(\rhobar,\tau_{1,s}) = 0$.

    If $\bar{x} = 0$ then \[R^\square(\rhobar) \cong \Oc[[X_1,\ldots,X_5]]/(X_1X_2).\] The quotients
    by the two minimal primes are $R^\square(\rhobar,\tau_{1,s})$ and $R^\square(\rhobar,
    \tau_{1,ns})$, so that both are formally smooth of relative dimension 4 over $\Oc$.  The minimal
    primes $\af_{nr}$ and $\af_N$ of $\Rbarbox(\rhobar)$ which respectively define
    $\Rbarbox(\rhobar,\tau_{1,s})$ and $\Rbarbox(\rhobar,\tau_{1,ns})$ are
    distinct.
  \end{enumerate}
\end{proposition}
\begin{proof}
 For the first part, write 
\begin{align*}
  \rho^\square(\sigma) &= \twomat{1+A}{B}{C}{1+D} \\
  \rho^\square(\phi) &= \twomat{1+P}{y + R}{S}{1+Q}
\end{align*} 
where $y$ is a lift of $\bar{y}$ (taken to be zero if $\bar{y} = 0$) and $A,B,C,D,P,Q,R,S \in \mf$.

Let $I = (A,B,C,D)$.  Considering the equation $\rho^\square(\phi)\rho^\square(\sigma) =
  \rho^\square(\sigma)^q\rho^\square(\phi)$ modulo the ideal $I\mf$ gives equations
  $Cy\equiv(q-1)A$, $B+Dy \equiv qAy+qB$, $C \equiv qC$ and $(q-1)D + qCy \equiv 0$, all modulo
  $I\mf$.  As $q \neq 1 \mod l$ we find that $I = I\mf $.  Therefore, by Nakayama's lemma, $I = 0$
  and $\rho^\square$ is unramified.  So $R^\square(\rhobar) = R^\square(\rhobar, \tau_{1,s}) \cong
  \Oc[[P,Q,R,S]]$ as claimed.  Note that this proof is still valid if $q = -1 \mod l$.
  
The proof of the second part is similar.  By lemma~\ref{diagonalization}, we may write 
\begin{align*}
  \rho^\square(\sigma) &= \twomat{1}{X}{Y}{1}^{-1} \twomat{1+A}{x+B}{C}{1+D} \twomat{1}{X}{Y}{1}\\
  \rho^\square(\phi)   &= \twomat{1}{X}{Y}{1}^{-1} \twomat{q(1+P)}{0}{0}{1+Q} \twomat{1}{X}{Y}{1}
\end{align*}
with $x$ a lift of $\bar{x}$ (taken to be zero if $\bar{x} = 0$) and $A,B,C,D,X,Y,P,Q \in \mf$.

Let $I = (A,C,D)$.  Considering the relation $\phi \sigma \phi^{-1} = \sigma^q$ modulo $I\mf$ and
applying Nakayama's lemma as before now yields $A = C = D = 0$ (using that $q^2 \neq 1 \mod l$).
The relation (not modulo any ideal) gives that $(x+B)(P-Q) = 0$, and it is easy to see if this
equality holds then the given formulae for $\rho^\square$ do indeed define a representation so
that \[R^\square(\rhobar) = \frac{\Oc[[B,P,Q,X,Y]]}{((x+B)(P-Q))}.\]

If $\bar{x} \neq 0$ then this implies that $P = Q$.  Then $R^\square(\rhobar) = \Oc[[B,P,X,Y]]$.  It
is clear that $R^\square(\rhobar) = R^\square(\rhobar,\tau_{1,ns})$, and the proposition follows.

If $\bar{x} = 0$ then, writing $U = P-Q$, we have $R^\square(\rhobar) = \Oc[[B,P,U,X,Y]]/(BU)$. In
these coordinates, it is clear from the description of $\rho^\square$ that
\[R^\square(\rhobar,\tau_{1,s}) = R^\square(\rhobar)/(B)\] and \[R^\square(\rhobar, \tau_{1,ns}) =
R^\square(\rhobar)/(U).\]  The proposition follows.
\end{proof}

\subsection{$q=-1\bmod l$}
Suppose that $q = -1 \bmod l$.  By proposition \ref{no-extensions}, we have already dealt with the
cases in which the eigenvalues of $\rhobar(\phi)$ are not in the ratio $1$ or $-1$.  All other cases are dealt with by
the following (after twisting and conjugating $\rhobar$).  By lemma \ref{lem:type-reduction},
the only possible types when $\rhobar |_{\tilde{P}_F}$ is trivial are
$\tau_{1,s}$, $\tau_{1,ns}$ and $\tau_{\xi}$ for $\xi$ a non-trivial $l^b$th root of
unity.
\begin{proposition} \label{q+1}
  Suppose that $q = -1 \bmod l$ and that $\rhobar |_{\tilde{P}_F}$ is trivial.
  \begin{enumerate}
  \item Suppose that $\rhobar(\sigma) = \twomat{1}{0}{0}{1}$ and $\rhobar(\phi) =
    \twomat{1}{\bar{y}}{0}{1}$ for $\bar{y} \in \FF$. Then \[R^\square(\rhobar,\tau_{1,s}) =
    R^\square(\rhobar)\] is formally smooth of relative dimension 4 over $\Oc$, while
    \[R^\square(\rhobar,\tau_{1,ns}) = R^\square(\rhobar,\tau_{\xi}) = 0.\]  If $\af_{nr}$ is the unique
    minimal prime of $\Rbarbox(\rhobar)$, then we have 
   \[ Z^4(\Rbarbox(\rhobar,\tau)) = \begin{cases}
     [\af_{nr}] & \text{if $\tau = \tau_{1,s}$} \\
     0 & \text{if $\tau = \tau_{1,ns}$} \\
     0 & \text{if $\tau = \tau_{\xi}$.}
     \end{cases}\]
  \item Suppose that $\rhobar(\sigma) = \twomat{1}{\bar{x}}{0}{1}$ and $\rhobar(\phi) =
    \twomat{q}{0}{0}{1}$ for $\bar{x} \in \FF$.  
    \begin{enumerate}
    \item If $\bar{x} \neq 0$, then $R^\square(\rhobar,\tau_{1,ns})$ and
      $R^\square(\rhobar,\tau_{\xi})$ are formally smooth of relative dimension 4 over $\Oc$, while
      $R^\square(\rhobar,\tau_{1,s}) = 0$.  If $\af_{N}$ is the prime ideal of
      $\Rbarbox(\rhobar)$ cutting out $\Rbarbox(\rhobar, \tau_{1,ns})$ then we have
      \begin{equation}
      \label{q+1-cases-2} Z^4(\Rbarbox(\rhobar,\tau)) = \begin{cases}
     0 & \text{if $\tau = \tau_{1,s}$} \\
     [\af_N] & \text{if $\tau = \tau_{1,ns}$} \\
     [\af_N] & \text{if $\tau = \tau_{\xi}$.}
     \end{cases}
   \end{equation}

 \item If $\bar{x} = 0$, then $R^\square(\rhobar,\tau_{1,s})$ is formally smooth of relative
   dimension 4 over $\Oc$ and \[R^\square(\rhobar,\tau_{1,ns})\cong
   \frac{\Oc[[X_1,\ldots,X_6]]}{\left((X_1,X_3) \cap (X_2,X_3-(q+1))\right)}\] is a
   non-Cohen--Macaulay ring of relative dimension 4 over $\Oc$.  Its spectrum is the scheme
   theoretic union of two formally smooth components that do not intersect in the generic fibre.
   Lastly, \[R^\square(\rhobar,\tau_{\xi}) \cong
   \frac{\Oc[[X_1,\ldots,X_5]]}{(X_1X_2-(\xi-\xi^{-1})^2)}\] is a complete intersection domain of
   relative dimension 4 over $\Oc$ with formally smooth generic fibre.  If $\af_{nr}$ is the prime
   of $\Rbarbox(\rhobar)$ corresponding to $\Rbarbox(\rhobar,\tau_{1,s})$ and $\af_N, \af_N'$ are
   the prime ideals of $\Rbarbox(\rhobar)$ corresponding to the two minimal primes of
   $\Rbarbox(\rhobar,\tau_{1,ns})$, then we have
    \begin{equation}
    \label{q+1-cases-3}Z^4(\Rbarbox(\rhobar,\tau)) = \begin{cases}
     [\af_{nr}] & \text{if $\tau = \tau_{1,s}$} \\
     [\af_{N}] + [\af_{N'}] & \text{if $\tau = \tau_{1,ns}$} \\
     [\af_N] + [\af_{N'}] & \text{if $\tau = \tau_{\xi}$.}
     \end{cases}
   \end{equation}
    \end{enumerate}
  \end{enumerate}
\end{proposition}
\begin{proof}
  The proof of the first part is identical to that of proposition \ref{banal}, part 1. 

For the second part, by lemma \ref{diagonalization} we may write 
\begin{align*}
\rho^\square(\sigma) &= \twomat{1}{X}{Y}{1}\twomat{1+A}{x+B}{C}{1+D}\twomat{1}{X}{Y}{1} \\
\rho^\square(\phi)   &= \twomat{1}{X}{Y}{1}\twomat{-(1+P)}{0}{0}{1+Q}\twomat{1}{X}{Y}{1}
\end{align*}
with $x$ a lift of $\bar{x}$ (taken to be zero if $\bar{x} = 0$) and $A,B,C,D,X,Y,P,Q \in \mf$.

Firstly, it is clear that $R^\square(\rhobar,\tau_{1,s}) = 0$ if $\bar{x} \neq 0$
and \[R^\square(\rhobar,\tau_{1,s}) \cong \Oc[[P,Q,X,Y]]\] if $\bar{x} = 0$.

Next we deal with $\tau_{1,ns}$.  On $R^\square(\rhobar, \tau_{1,ns})$ we have the equations 
\begin{align*}\tr(\rho^\square(\sigma)) &= 2\\
  \det (\rho^\square(\sigma)) &= 1\\
  q \tr(\rho(\phi))^2 &= (q+1)^2 \det(\rho(\phi)) \\
  \intertext{and} 
\rho^\square(\phi)\rho^\square(\sigma)\rho^\square(\phi)^{-1} &=
  \rho^\square(\sigma)^{q}.
\end{align*} The first two of these may be rewritten as \[A = -D\] and \[A^2 + (x+B)(C) = 0\] and
the third can be written as \[(q + 1 + P + qQ)(q + 1 + Q + qP)=0.\]

By the Cayley--Hamilton theorem, $(\rho^\square(\sigma)-1)^2 = 0$ on
$R^\square(\rhobar,\tau_{1,ns})^\circ$; it follows that $\rho^\square(\sigma)^q - 1 =
q(\rho^\square(\sigma)-1)$ on $R^\square(\rhobar, \tau_{1,ns})^\circ$ and so the relation $\phi
\sigma\phi^{-1} = \sigma^q$ together with $D = -A$ yields the equation:
\[\twomat{A}{-(x+B)\frac{1+P}{1+Q}}{-C\frac{1+Q}{1+P}}{-A} = \twomat{qA}{q(x+B)}{qC}{-qA}.\]
Equating coefficients and using that 2 and $q-1$ are invertible we obtain that $A=D= 0$ and that 
\begin{align}
  (x+B)(q + 1 + qQ + P) &= 0 \\
  C(q + 1 + Q + qP) &= 0 \\
  (x+B)C &= 0\\
  (q + 1 + Q + qP)(q + 1 + qQ + P) &= 0
\end{align}
is a complete set of equations cutting out $R^\square(\rhobar,\tau_{1,ns})^\circ$ (the last two
equations being, respectively, the conditions on $\det(\rho^{\square}(\sigma))$ and on
$\rho^\square(\phi)$).

If $\bar{x} \neq 0$ then these equations are equivalent to $q+ 1 + qQ + P = 0$ and $C = 0$ and so we
see that \[R^\square(\rhobar,\tau_{1,ns}) \cong \Oc[[B,P,X,Y]].   \]

If $\bar{x} = 0$ then the left hand sides of the four equations given generate the ideal \[I=(B,q +
1 + Q + qP) \cap (C,q + 1 + qQ + P)\] in $\Oc[[B,C,P,Q,X,Y]]$.  Since $\Oc[[B,C,P,Q,X,Y]]/I$ is
reduced and $\lambda$-torsion free and a Zariski dense set of its $\bar{E}$-points have type
$\tau_{1,ns}$, it is equal to $R^\square(\rhobar,\tau_{1,ns})$.  After the change of variables $X_3
= \frac{q(q+1+Q + qP)}{(q-1)(1+P)}$, $(X_1,X_2,X_4,X_5,X_6) = (B,C,P,X,Y)$ we get the presentation
given in the proposition.

Let \[\Sc = \frac{\Oc[[X_1,X_2,X_3]]}{(X_1,X_3) \cap (X_2,X_3 - (q+1))}.\] Then $\Sc$ has dimension
two.  We show that $\Sc$ is not Cohen--Macaulay; the same is then true for
$R^\square(\rhobar,\tau_{1,ns})$. Now, $\lambda$ is a non-zerodivisor in $\Sc$, and \[\Sc/\lambda =
\frac{\FF[[X_1,X_2,X_3]]}{(X_1X_2,X_1X_3,X_2X_3,X_3^2)}.\] The maximal ideal of $\Sc/\lambda$ is
annihilated by $X_3$, and $X_3 \neq 0$ in $\Sc/\lambda$.  So $\Sc/\lambda$, and hence $\Sc$, is not
Cohen--Macaulay.  The remaining statements about $R^\square(\rhobar,\tau_{1,ns})$ are clear.

Now suppose that $\tau = \tau_\xi$.  On $R^\square(\rhobar,\tau_{\xi})$ we have
\begin{align*}\tr(\rho^\square(\sigma)) &= \xi + \xi^{-1} \\ 
\det (\rho^\square(\sigma)) & =1\\ 
\intertext{and}
\rho^\square(\phi)\rho^\square(\sigma)\rho^\square(\phi)^{-1} &= \rho^\square(\sigma)^q.
\end{align*}
The first two of these may be rewritten as \[A + D = \xi + \xi^{-1} - 2\] and \[AD - (x+B)C = 2 -
\xi - \xi^{-1}.\] By the Cayley--Hamilton theorem, $(\rho^\square(\sigma) -
\xi)(\rho^\square(\sigma) - \xi^{-1}) = 0$.  As \[T^q \equiv \xi + \xi^{-1} - T \mod (T -
\xi)(T-\xi^{-1}) \] in $\ZZ[T]$, the relation $\phi\sigma\phi^{-1} = \sigma^q$ yields
\[\twomat{1+A}{-(x+B)\frac{1+P}{1+Q}}{-C\frac{1+Q}{1+P}}{1+D} = \twomat{\xi + \xi^{-1} - 1 -
  A}{-(x+B)}{-C}{\xi + \xi^{-1} - 1 - D}.\]
Equating coefficients and combining with the equation $\det(\rho^\square(\sigma)) = 1$ we get:
\begin{align}
  A = D &= \frac{\xi + \xi^{-1}}{2} - 1\\ 
(x+B)(P-Q) &= 0 \\ 
C(P-Q) &= 0 \\ 
4(x+B)C &= (\xi - \xi^{-1})^2.
\end{align}
If $\bar{x} \neq 0$ then these equations are equivalent to $P=Q$ and $C = \frac{(\xi - \xi^{-1})^2}{4(x+B)}$, so that \[R^\square(\rhobar, \tau_\xi) \cong \Oc[[X,Y,B,P]].\]

If $\bar{x} = 0$, then the equations imply that 
\[ 0 = BC(P-Q) = \left(\frac{\xi - \xi^{-1}}{2}\right)^2(P-Q)\]
and hence that $P=Q$, as $R^\square(\rhobar,\tau_\xi)$ is $\lambda$-torsion free by definition.
Thus 
\[R^\square(\rhobar,\tau_\xi) \cong \frac{\Oc[[X,Y,B,C,P]]}{(4BC - (\xi - \xi^{-1})^2)}.\]
The remaining statements about $R^\square(\rhobar,\tau_\xi)$ are clear.

Now we calculate the various $Z^4(\Rbarbox(\rhobar,\tau))$.  For part 1, this is
trivial.  For part 2, we have computed each $\Rbarbox(\rhobar,\tau)$ as a quotient of the ring
$\FF[[A,B,C,D,P,Q,X,Y]]$ by an ideal which we call $I(\tau)$.  We see that if $\bar{x} \neq 0$ then
$I(\tau_{1,ns}) = I(\tau_{\xi})$,
and $\Rbarbox(\rhobar,\tau_{1,s}) = 0$, from which equation \ref{q+1-cases-2} follows.  If $\bar{x} = 0$
then \begin{align*}
I(\tau_{1,s}) &= (A,B,C,D)\\ 
I(\tau_{1,ns}) &= (A,D,BC,B(Q-P),C(Q-P),(Q-P)^2)\\ \intertext{and}
I(\tau_{\xi}) &= (A,D,BC,Q-P).
\end{align*}

The minimal primes above these $I(\tau)$ in $\FF[[A,\ldots, Y]]$ are $\af_{nr} = (A,B,C,D)$, $\af_N
= (A,C,D,Q-P)$ and $\af_{N'} = (A,B,D,Q-P)$; the multiplicities in equation \ref{q+1-cases-3} are
then easily verified.
\end{proof}

\begin{remark} \label{rem:CM-q+1} When $\rhobar$ is unramified and $\rhobar(\phi) =
  \twomat{q}{0}{0}{1}$, the ring $R^\square(\rhobar, \tau_{1,ns})$ is not Cohen--Macaulay.
  However the ring $R^\square(\rhobar,\text{unip})$, defined to be the maximal reduced quotient of
  $R^\square(\rhobar)$ on which $\rho^\square(\sigma)$ is unipotent (so that $\Spec
  R^\square(\rhobar,\text{unip})$ is the scheme-theoretic union of $\Spec R^\square(\rhobar,\tau_{1,s})$
  and $\Spec R^\square(\rhobar, \tau_{1,ns})$ in $\Spec R^\square(\rhobar)$), \emph{is}
  Cohen--Macaulay.  Indeed it is easy to see from the above proof that 
  \[R^\square(\rhobar,\text{unip}) \cong
  \frac{\Oc[[X_1,\ldots,X_6]]}{(X_1X_2,X_1(X_3-(q+1)),X_2X_3)}\] which is Cohen--Macaulay
  ($(\lambda,X_1+X_2+X_3,X_4,X_5,X_6)$ is a regular sequence).
\end{remark}

\subsection{$q = 1 \bmod l$}

Suppose that $q = 1 \bmod l$.  By proposition \ref{no-extensions}, we have already dealt with the
cases in which the eigenvalues of $\rhobar(\phi)$ are distinct.  All other cases are dealt with by
the following (after twisting and conjugating $\rhobar$).  Note that by lemma \ref{lem:type-reduction},
the only possible types when $\rhobar |_{\tilde{P}_F}$ is trivial are
$\tau_{\zeta,s}$, $\tau_{\zeta,ns}$ and $\tau_{\zeta_1,\zeta_2}$ for $\zeta$ any $l^a$th root of
unity and $\zeta_1,\zeta_2$ any distinct $l^a$th roots of unity.
\begin{proposition} \label{q-1}
  Suppose that $q = 1 \bmod l$ and that $\rhobar |_{\tilde{P}_F}$ is trivial.  Suppose that
  $\rhobar(\sigma) = \twomat{1}{\bar{x}}{0}{1}$ and $\rhobar(\phi) = \twomat{1}{\bar{y}}{0}{1}$ for
  $\bar{x},\,\bar{y} \in \FF$.
  \begin{enumerate}
  \item If $\bar{x} \neq 0$ then $R^\square(\rhobar,\tau_{\zeta,s}) = 0$, while
    $R^\square(\rhobar,\tau_{\zeta,ns})$ and $R^\square(\rhobar, \tau_{\zeta_1,\zeta_2})$ are
    formally smooth over $\Oc$ of relative dimension 4.  

    If $\af_N$ is the four-dimensional prime of $\Rbarbox(\rhobar)$ corresponding to
    $\Rbarbox(\rhobar,\tau_{1,ns})$ then we have:     
    \begin{equation}
      \label{eq:q-1case1}
      Z^4(\Rbarbox(\rhobar,\tau)) =
      \begin{cases}
        0 & \text{if $\tau = \tau_{\zeta,s}$} \\
        [\af_N] & \text{if $\tau = \tau_{\zeta,ns}$}\\
        [\af_N] & \text{if $\tau = \tau_{\zeta_1,\zeta_2}$.}
      \end{cases}
    \end{equation}

  \item If $\bar{x} = 0$ and $\bar{y} \neq 0$, then $R^\square(\rhobar,\tau_{\zeta,s})$ and
    $R^\square(\rhobar,\tau_{\zeta,ns})$ are formally smooth over $\Oc$ of relative dimension 4
    while \[R^\square(\rhobar,\tau_{\zeta_1,\zeta_2}) \cong \Oc[[X_1,\ldots,X_5]]/(X_1^2X_2 -
    (\zeta_1-\zeta_2)^2)\] is a complete intersection domain of relative dimension 4 over
    $\Oc$.

    If $\af_{nr}$ and $\af_N$ are the prime ideals of $\Rbarbox(\rhobar)$ corresponding to
    $\Rbarbox(\rhobar,\tau_{1,s})$ and $\Rbarbox(\rhobar,\tau_{1,ns})$
    respectively, then
    \begin{equation}
      \label{eq:q-1case2}
      Z^4(\Rbarbox(\rhobar,\tau)) =
      \begin{cases}
        [\af_{nr}] & \text{if $\tau = \tau_{\zeta,s}$} \\
        [\af_N] & \text{if $\tau = \tau_{\zeta,ns}$}\\
        2[\af_{nr}] + [\af_N] & \text{if $\tau = \tau_{\zeta_1,\zeta_2}$.}
      \end{cases}
    \end{equation}

  \item If $\bar{x} =\bar{y} = 0$, then $R^\square(\rhobar,\tau_{\zeta,s})$ is formally smooth over
    $\Oc$ of relative dimension 4, $R^\square(\rhobar,\tau_{\zeta,ns})$ is a non-Gorenstein
    Cohen--Macaulay domain of relative dimension 4 over $\Oc$, while
    $R^\square(\rhobar,\tau_{\zeta_1,\zeta_2})$ is a non-Gorenstein Cohen--Macaulay
    domain of relative dimension 4 over $\Oc$.

    Both $\Rbarbox(\rhobar,\tau_{\zeta,s})$ and
    $\Rbarbox(\rhobar,\tau_{\zeta,ns})$ are domains; let the corresponding primes of
    $\Rbarbox(\rhobar)$ be $\af_{nr}$ and $\af_N$ respectively. Then
     \begin{equation}
      \label{eq:q-1case3}
      Z^4(\Rbarbox(\rhobar,\tau)) =
      \begin{cases}
        [\af_{nr}] & \text{if $\tau = \tau_{\zeta,s}$} \\
        [\af_N] & \text{if $\tau = \tau_{\zeta,ns}$}\\
        2[\af_{nr}] + [\af_N] & \text{if $\tau = \tau_{\zeta_1,\zeta_2}$.}
      \end{cases}
    \end{equation} 
  \end{enumerate}
\end{proposition}
\begin{proof}
  Write 
  \begin{align*}
    \rho^\square(\sigma) &= \twomat{1+A}{x+B}{C}{1+D} \\
    \rho^\square(\phi) &= \twomat{1+P}{y+R}{S}{1+Q}
  \end{align*}
  with $A,B,C,D,P,Q,R,S \in \mf$ and $x,y$ lifts of $\bar{x},\bar{y}$ (taken to be zero if $\bar{x}$
  or $\bar{y} = 0$).
  
  First, we have that $R^\square(\rhobar,\tau_{\zeta,s}) = 0$ if $\bar{x} \neq 0$ and 
  \[R^\square(\rhobar,\tau_{\zeta,s}) \cong \Oc[[P,Q,R,S]]\] otherwise.

  Next, we look at $R^\square(\rhobar,\tau_{\zeta_1,\zeta_2})$ for $\zeta_1$ and $\zeta_2$ distinct $l^a$th
  roots of unity.  The condition that $\rho^\square(\sigma)$ has characteristic polynomial
  $(t-\zeta_1)(t-\zeta_2)$ is equivalent to the equations
  \begin{align*}
    A+D &= \zeta_1 + \zeta_2 - 2 \\
    \intertext{and}
    AD - (x+B)C &= (\zeta_1 - 1)(\zeta_2 - 1).
  \end{align*}
  Since $(t-\zeta_1)(t-\zeta_2) \mid t^{q-1} - 1$, by the Cayley--Hamilton theorem we have
  \[\rho^\square(\sigma)^q=\rho^\square(\sigma)\] on $R^\square(\rhobar,\tau_{\zeta_1,\zeta_2})^{\circ}$.
  So the relation $\phi\sigma\phi^{-1} = \sigma^{q}$ yields:
  \[ \twomat{1+A}{x+B}{C}{1+D}\twomat{1+P}{y+R}{S}{1+Q} = \twomat{1+P}{y+R}{S}{1+Q}
  \twomat{1+A}{x+B}{C}{1+D}.\]
  Equating coefficients, eliminating $D$ and writing $U = P-Q$ and \[F = A-D = 2A - (\zeta_1 + \zeta_2 - 2)\] we see that 
  $R^\square(\rhobar,\tau_{\zeta_1,\zeta_2})$ is the reduced, $l$-torsion--free quotient of
  $\Oc[[B,C,F,P,R,S,U]]$ by the relations:
  \begin{align}
    \label{eq:2}
    (x+B)S &= (y+R)C \\
    F(y+R) &= U(x+B) \\
    FS     &= UC \\
    (\zeta_1 - \zeta_2)^2 &= F^2 + 4(x+B)C. \label{eq:3}
  \end{align}

  If $\bar{x} \neq 0$ then these equations are equivalent to $U = F(y+R)(x+B)^{-1}$, $C =
  \tfrac{1}{4}\left((\zeta_1 - \zeta_2)^2 - F^2\right)(x+B)^{-1}$ and $S = C(y+R)(x+B)^{-1}$, so
  that
  \[R^\square(\rhobar,\tau_{\zeta_1,\zeta_2}) \cong \Oc[[B,F,P,R]].\]

  If $\bar{x}  = 0$ and $\bar{y} \neq 0$, then $F = BU(y+R)^{-1}$ and $C = BS(y+R)^{-1}$ will be a
  solution to the equations \eqref{eq:2} to \eqref{eq:3} provided that 
\[ (\zeta_1 - \zeta_2)^2  = \left(\frac{B}{y+R}\right)^2(U^2 + 4(y+R)S);\]
  writing $(X_1,\ldots,X_5) = (B(y+R)^{-1},U^2 + 4(y+R)S,P,R,U)$ we get 
\[R^\square(\rhobar,\tau_{\zeta_1,\zeta_2}) \cong \frac{\Oc[[X_1,\ldots, X_5]]}{X_1^2X_2 -
  (\zeta_1-\zeta_2)^2}\] as claimed.  The other statements about
$R^\square(\rhobar,\tau_{\zeta_1,\zeta_2})$ follow easily.
  
If $\bar{x} = \bar{y} = 0$, then let $\Ac = \Oc[[B,C,F,P,R,S,U]]$ and $I \normal \Ac$ be the ideal:
\[I = \left((\zeta_1 - \zeta_2)^2 - F^2 - 4BC, BS-CR,FR-BU,FS-CU\right).\]
Note that the ideal
\[J = (BS-CR,FR-BU,FS-CU)\] is generated by the $2\times2$ minors of $\begin{pmatrix} B & C & F \\ R
  & S & U \end{pmatrix}$.  So, by proposition \ref{prop:minors}, $\Ac/J$ is a Cohen--Macaulay,
non-Gorenstein domain.  Since $F^2 - 4BC$ is not zero in the domain $\Ac/J \otimes \FF$, $(\lambda,
F^2 - 4BC)$ is a regular sequence in $\Ac/J$.  Hence $(F^2 - 4BC - (\zeta_1 - \zeta_2)^2,\lambda)$
is a regular sequence in $\Ac/J$, and therefore $\Ac/I$ is $\Oc$-flat, Cohen--Macaulay and
non--Gorenstein.  It is reduced because it is Cohen--Macaulay and, as we shall show in the next
paragraph, generically reduced.

To show that $\Ac/I$ is irreducible, it suffices to show that $\Xc = \Spec (\Ac/I \otimes E)$ is
irreducible.  This follows if we can show that $\Xc$ is formally smooth and connected.  As $F^2 -
4BC \neq 0$ on $\Xc$, it is covered by the affine open subsets $\Uc_B = \{B \neq 0\}$ and $\Uc_F =
\{F \neq 0\}$. By the argument used in the $\bar{x} \neq 0$
case, $\Uc_B$ is formally smooth.  A similar argument works for $\Uc_F$: the projection map 
\[p : \Xc \rarrow \Spec\left(\frac{\Oc[[F,B,C,U,P]]}{(F^2 + 4BC - (\zeta_1-\zeta_2)^2)} \otimes
  E\right)\] is an isomorphism from $\Uc_F$ onto an open subscheme; but the right hand side is
easily seen to be formally smooth.  Hence $\Xc$ is formally smooth.  Note that the composition of
the map $p$ with the projection away from $U$ is a
continuous map with connected fibres and connected image, which admits a continuous section
(obtained by taking $R = S = U = 0$); it follows that $\Xc$ is connected, as required.  Since $\Xc$
is formally smooth it is certainly reduced; therefore $\Ac/I$ is generically reduced (as it is
$\Oc$-flat), just as we claimed above.

Now we turn to $R^\square(\rhobar,\tau_{\zeta,ns})$.  By lemma \ref{twisting} we may assume that $\zeta = 1$.
The condition that the characteristic polynomial of $\rho^\square(\sigma)$ be $(t-1)^2$ is
equivalent to the equations:
\begin{align*}
  A+D &= 0 \\
  AD - (x+B)C &= 0.
\end{align*}
Writing $T = P+Q$ and $U = P-Q$, the condition that \[q \tr(\rho^\square(\phi))^2 = (q+1)^2
\det(\rho^\square(\phi))\] becomes
\[(q-1)^2(T+2)^2 = (q+1)^2(U^2  +4(y+R)S).\]
Since $t^q - 1 \equiv q(t-1)\mod (t-1)^2$, the Cayley--Hamilton theorem shows that \[\rho^\square(\sigma)^q
- 1 = q(\rho^\square(\sigma)-1)\] on $R^\square(\rhobar,\tau_{1,ns})$.  From $\phi\sigma\phi^{-1} =
\sigma^q$ we therefore get the equation
\[ (\phi-1)(\sigma - 1) - (\sigma-1)(\phi-1) = (q-1)(\sigma-1)\phi\]
on $R^\square(\rhobar,\tau_{1,ns})$. Equating coefficients and substituting $D=-A$ we
get the equations
\begin{align}
  A^2 + (x+B)C &= 0 \label{eq:3a} \\
  (q-1)^2(T+2)^2 &= (q+1)^2(U^2 + 4(y+R)S) \label{eq:4} \\
  C(y+R) - S(x+B) &= (q-1)(A(1+P) + (x+B)S) \label{eq:5} \\
  U(x+B) - 2A(y+R) &= (q-1)(A(y+R) + (x+B)(1+Q)) \label{eq:6} \\
  2AS - CU &= (q-1)(C(1+P) - AS) \label{eq:7} \\
  S(x+B) - C(y+R) &= (q-1)(C(y+R) - A(1+Q)). \label{eq:8} 
\end{align}
After replacing $P$ with $\frac{T+U}{2}$ and $Q$ with $\frac{T-U}{2}$, this is a complete set of
equations for $R^\square(\rhobar,\tau_{1,ns})$ in $\Oc[[A,B,C,R,S,T,U]]$.

We replace equations \eqref{eq:5} and \eqref{eq:8} by their sum and difference:
\begin{align}
  (q-1)(AU + (x+B)S + C(y+R)) &= 0 \label{eq:9}\\
  (q+1)(C(y+R) - (x+B)S) &= (q-1)A(2+T) \label{eq:10}.
\end{align}

As $R^\square(\rhobar,\tau_{1,ns})$ is $\lambda$-torsion free, equation \eqref{eq:9} implies that 
\begin{equation}
  \label{eq:11}
  AU + (x+B)S + C(y+R) = 0.
\end{equation}
We could also write this equation as $\tr((\sigma-1)\phi) = 0$.

Putting $\alpha(T) = \frac{(q-1)(2+T)}{q+1}$, we find that equations~\eqref{eq:3a}, \eqref{eq:4},
\eqref{eq:6},\eqref{eq:7} and [\eqref{eq:10} and \eqref{eq:11}] may respectively be rewritten:
\begin{align*}
    A^2 + (x+B)C  &= 0 \\
    4(y+R)S + (U-\alpha(T))(U + \alpha(T)) &= 0 \\
    2A(y+R) -(x+B)(U - \alpha(T))&=0 \\
    2AS - C(U + \alpha(T)) &= 0\\
    2C(y+R)  + A(U - \alpha(T)) &= 0 \\
    2(x+B)S + A(U+\alpha(T))&= 0.
      \end{align*}
Let $I$ be the ideal of $\Oc[[A,B,C,R,S,T,U]]$ generated by these equations and let $R' =
\Oc[[A,B,C,R,S,T,U]]/I$, so that $R^\square(\rhobar,\tau_{1,ns})$ is the maximal reduced $l$-torsion
free quotient of $R'$.

If $\bar{x} \neq 0$ then $C$, $U$ and $S$ are uniquely determined by $A$, $B$, $R$ and $T$ so that 
\[R^\square(\rhobar,\tau_{1,ns}) \cong \Oc[[A,B,R,T]].\]
If $\bar{y} \neq 0$, then $S$, $C$ and $A$ are uniquely determined by $B$, $R$, $T$ and $U$ so that 
\[R^\square(\rhobar,\tau_{1,ns}) \cong \Oc[[B,R,T,U]].\]

If $\bar{x} = \bar{y} = 0$, so that $x=  y = 0$, observe that 
\[R' \cong \frac{\Bc}{J_0 + J_1} \] where \[\Bc = \Oc[[X_1,\ldots,X_4,Y_1,\ldots,Y_4,T]],\] the
ideal $J_0$ is generated by the $2\times 2$ minors of \[\begin{pmatrix} X_1 & X_2 & X_3 & X_4 \\ Y_1
  & Y_2 & Y_3 & Y_4 \end{pmatrix}\] and $J_1 = (X_1 + Y_2,Y_3 - X_4 + 2\frac{q-1}{q+1})$.\footnote{There is a typo here
in the published version.}  (The
change of variables is $X_1 = A$, $X_2 = B$, $Y_1 = C$, $Y_2 = -A$, $X_3 = -2R/(2+T)$, $Y_4 =
2S(2+T)$, $Y_3 = (U - \alpha(T))/(2+T)$, and $X_4 = (U + \alpha(T))/(2+T)$.)  Then by proposition
\ref{prop:minors}, $\Bc/J_0$ is a Cohen--Macaulay, non-Gorenstein domain.  Moreover, $(\lambda, X_1
+ Y_2, X_3 - Y_4)$ may be checked to be a regular sequence on $\Bc/J_0$.  Therefore $(X_1 + Y_2, X_3
+ Y_4 + 2\frac{q-1}{q+1},\lambda)$ is also regular, and so $\Bc/(J_0+ J_1)$ is Cohen--Macaulay,
$\Oc$-flat and not Gorenstein.  The same is then true for $R'$.

We show that $R' \otimes \FF$ is a domain, which implies that $R'$ is a domain.  Let $\bar{I}$ be the image of $I$ in
$\FF[[A,B,C,R,S,T,U]]$.  Then $\bar{I}$ is homogeneous so $\gr(R' \otimes \FF) = \FF[A,B,C,R,S,T,U]/\bar{I}$ and it
suffices to check that this is a domain (by \cite{Eisenbud1995-CommutativeAlgebra} corollary 5.5).  It is therefore
sufficient to check that $\Proj(\gr(R'\otimes \FF))$ is reduced and irreducible.\footnote{This argument is not quite
  correct, see Section~\ref{sec:erratum} for a correction. The error originates with me, not
  \cite{Taylor2009-ModularityCourse}.}  But it is easy to check this on the usual seven affine pieces.  This argument is
from \cite{Taylor2009-ModularityCourse}.

Next we show that $R^\square(\rhobar,\tau_{1,ns})$ is reduced.  In fact, we show that
\[\Yc = \Spec (R^\square(\rhobar,\tau_{1,ns})\tensor E)\] is formally smooth, which implies that
$R^\square(\rhobar,\tau_{1,ns})$ is reduced because it is Cohen--Macaulay and $\Oc$-flat.  For
$\star = B$, $C$, $R$, $S$, $U-\alpha(T)$ or $U + \alpha(T)$ let $\Uc_{\star} = \{ \star \neq 0\}
\subset \Yc$ be the corresponding affine open subscheme.  Then the $\Uc_{\star}$ are an affine open
cover of $\Yc$.  For $\star = B$, $C$, $R$ or $S$ we see that $\Uc_{\star}$ is formally smooth by
the same argument as for the cases $\bar{x} \neq 0$ and $\bar{y} \neq 0$ above.  For $\Uc_{U\pm
  \alpha(T)}$, the projection morphism
\[ p : \Uc_{U - \alpha(T)} \rarrow \Spec\left( \frac{\Oc[[C,R,S,T]]}{4RS - (U + \alpha(T))(U - \alpha(T))}
\tensor E\right)\]
is an isomorphism onto an open subscheme.  But the right hand scheme is easily seen to be formally
smooth as required.

Finally we calculate the $Z^4(\bar{R}(\rhobar,\tau))$.  We do this when $\bar{x} = \bar{y} =
0$, as the other cases are similar but easier.  We have written each
$\Rbarbox(\rhobar,\tau)$ as the quotient of $\FF[[A,B,C,R,S,T,U]]$ by an ideal which we
call $I(\tau)$.  Let us recall the presentations:
\begin{align*}
I(\tau_{\zeta,s}) &= (A,B,C) \\
I(\tau_{\zeta,ns}) &= (A^2 + BC, 4RS + U^2, 2CR + AU, 2BS + AU, 2AR - BU, 2AS - CU) \\
I(\tau_{\zeta_1,\zeta_2}) &= (A^2 + BC, BS - CR, 2AR - BU, 2AS - CU)
\end{align*}
(using that $A + D = 0$ in $\Rbarbox(\rhobar,\tau)$ for each $\tau$, we have eliminated $D$ and
written $F = A-D = 2A$).  We have already shown that $I(\tau_{\zeta,s})$ and $I(\tau_{\zeta,ns})$
are prime --- they are the ideals denoted $\af_{nr}$ and $\af_N$ in the statement of the
theorem.  It is clear that
\[Z^4(\Rbarbox(\rhobar,\tau_{\zeta,s})) = [\af_{nr}]\]
and \[Z^4(\Rbarbox(\rhobar,\tau_{\zeta,ns})) = [\af_{N}].\] Suppose that $\pf$ is a prime
ideal of $\FF[[A,B,C,R,S,T,U]]$ containing $I(\tau_{\zeta_1,\zeta_2})$.  We show that $\pf$
contains $\af_{nr}$ or $\af_N$.  If $B,C\in \pf$ then $A \in \pf$ as $A^2 + BC \in
I(\tau_{\zeta_1,\zeta_2})$, and we have $\af_{nr} \subset \pf$.  Otherwise,
suppose that $B \not \in \pf$.  As $A^2 + BC \in \pf$, either both $A$ and $C$ are in $\pf$ or
neither is.  If $A,C \in \pf$ then from $2AR - BU \in \pf$ we deduce that $U \in \pf$, while from
$BS-  CR \in \pf$ we deduce that $S \in \pf$.  It is then easy to see that $\af_N \subset \pf$.  If
$A,B,C \not \in \pf$ then because $B(2CR + AU)$ and $C(2BS + AU)$ are in $I(\tau_{\zeta_1,\zeta_2})$
we see that $2CR + AU, 2BS + AU \in \pf$.  This implies that $A(4RS + U^2) \in \pf$, and so $4RS + U^2
\in \pf$ and hence $\af_N \subset \pf$ as required.  

To finish, it is easy to check that \[e(\Rbarbox(\rhobar,\tau_{\zeta_1,\zeta_2}),\af_{nr}) = 2\] and
that \[e(\Rbarbox(\rhobar,\tau_{\zeta_1,\zeta_2}),\af_N) = 1,\] and so we get equation
\ref{eq:q-1case3}.
\end{proof}

\subsection{Cohen--Macaulayness}\label{sec:CM} If $\tau_0$ is a
semisimple representation of $I_F$ over $E$, let $R(\rhobar,\tau_0)'$ be the maximal reduced and
$l$-torsion--free quotient of $R(\rhobar)$ all of whose $\bar{E}$-points give rise to
representations $\rho$ of $G_F$ with $\rho|_{I_F}^{ss} \cong \tau_0$.  Then I claim that
$R(\rhobar,\tau_0)'$ is always Cohen--Macaulay.  Indeed, if $\tau_0$ is non-scalar then we have
proved this above. If $\tau_0$ is scalar, then we may twist and assume that it is trivial. If $q
\not\equiv \pm 1 \mod l$, this follows from proposition 5.5.  If $q \equiv 1 \mod l$ then we can
deduce the claim from proposition 5.8 together with exercise 18.13
of~\cite{Eisenbud1995-CommutativeAlgebra}, which says that if $R/I$ and $R/J$ are $d$-dimensional
Cohen--Macaulay quotients of a noetherian local
ring $R$, and $\dim R/(I+J) = d-1$, then $R/(I\cap J)$ is Cohen--Macaulay if and only if $R/(I+J)$
is.  We take $R = R^\square(\rhobar)$, and $I$ and $J$ to be the ideals cutting out
$R^\square(\rhobar,\tau_s)$ and $R^\square(\rhobar,\tau_{ns})$ respectively.  Then $R/I$ and $R/J$
are Cohen--Macaulay, and $R/(I+J)$ is a quotient of the formally smooth ring $R/I$ by the single
equation $q\tr(\rho^\square(\phi))^2 = (q+1)^2\det(\rho^\square(\phi))$, and so is Cohen--Macaulay.
Therefore $R/(I \cap J)$ is Cohen--Macaulay as required.  When $q \equiv -1$ mod $l$ the claim
follows from proposition 5.6 unless $\rhobar$ is the direct sum of the trivial and
cyclotomic characters, in which case we use remark~\ref{rem:CM-q+1}.

For $n$-dimensional representations the unrestricted framed deformation ring $R^\square(\rhobar)$ is
always Cohen--Macaulay (in fact, a complete intersection; this is due to David Helm, building on work of Choi
\cite{Choi2009-LocalDeformationRings}).  It is natural to wonder whether the rings obtained by
fixing the semisimplified restriction to inertia are always Cohen--Macaulay.  Note that they are not
always Gorenstein.

For a discussion of how the Cohen--Macaulay property of local deformation rings can be used to show
that certain global Galois deformation rings are flat over $\Oc$, see section 5 of \cite{1111.3654}.

\section{Reduction of types -- proofs.}
\label{sec:repthy}
The aim of this section is to analyse the reduction modulo $l$ of the $K$-types $\sigma(\tau)$
defined in section \ref{sec:types}, and in particular to prove lemma \ref{lem:K-type-reduction}.  
\subsection{The essentially tame case.}
\label{sec:reduction_tame_types}
Suppose that $\tau = (r_\tau,N_\tau)$ where $r_\tau$ is a tamely ramified, semisimple representation
of $I_F$.  Then $\sigma(\tau)$ is inflated from a representation of $GL_2(k_F)$.  We will always use
the same notation for a representation of $GL_2(k_F)$ and its inflaton to $GL_2(\Oc_F)$.  For this subsection
let $G =GL_2(k_F)$, let $B$ be the subgroup of upper-triangular matrices, let $U$ be the subgroup of
unipotent elements of $B$, let $Z$ be the center of $G$ and fix an embedding $\alpha : k_L^\times
\into G$.  Fix a non-trivial additive character $\psi$ of $U$.  Then we have (see
e.g. \cite{BushnellHenniart2006_GL2LocalLanglands} chapter 6):
\begin{itemize}
\item If $r_\tau = (\mathrm{rec}(\tilde{\chi}) \oplus \mathrm{rec}(\tilde{\chi}))|_{I_F}$ and
  $N_\tau \neq 0$, where
  $\tilde{\chi}|_{\Oc_F^\times}$ is inflated from a character $\chi$ of $k_F^\times$, then
  \[\sigma(\tau) = (\chi\circ\det) \otimes \mathrm{St},\] where $\mathrm{St}$ is the Steinberg
  representation of $G$;
\item If $r_\tau = (\mathrm{rec}(\tilde{\chi}) \oplus \mathrm{rec}(\tilde{\chi}))|_{I_F}$ and
  $N_\tau = 0$, where $\tilde{\chi}|_{\Oc_F^\times}$ is inflated from a character $\chi$ of
  $k_F^\times$, then $\sigma(\tau) = \chi\circ\det$;
\item If $r_\tau = (\mathrm{rec}(\tilde{\chi}_1) \oplus \mathrm{rec}(\tilde{\chi}_2))|_{I_F}$, where
  $\tilde{\chi}_1|_{\Oc_F^\times}$ and $\tilde{\chi}_2|_{\Oc_F^\times}$ are inflated from
  \emph{distinct} characters $\chi_1$ and $\chi_2$ of $k_F^\times$, then
  \[\sigma(\tau)=\mu(\chi_1,\chi_2)\] where $\mu(\chi_1,\chi_2)=\Ind_B^G(\chi_1\otimes \chi_2)$;
\item If $r_\tau = (\Ind_{G_L}^{G_F}\mathrm{rec}(\tilde{\theta}))|_{I_F}$ where
  $\tilde{\theta}|_{\Oc_L^\times}$ is inflated from a character $\theta$ of $k_L^\times$ which is
  not equal to its $\Gal(k_L/k_F)$ conjugate $\theta^c$, then \[\sigma(\tau) = \pi_\theta\] where
  $\pi_\theta = \Ind_{ZU}^G(\theta|_Z\psi) - \Ind_{\alpha(k_L^\times)}^G\theta$ (this virtual
  representation is a genuine irreducible representation that is independent of the choice of $\psi$).  
\end{itemize}

The only isomorphisms between these representations are of the form $\mu(\chi_1,\chi_2) \cong
\mu(\chi_2,\chi_1)$ and $\pi_\theta \cong \pi_{\theta^c}$.  

We want to understand the reductions of these representations modulo $l$, and for this see
\cite{Helm2010-LadicFamiliesGL2}.  We will use analagous notation for representations of $G$ in
characteristic zero and in characteristic $l$; hopefully this will not cause confusion. 

If $q\neq \pm1 \bmod l$, then reduction modulo $l$ is a bijection between irreducible
$\bar{\FF}_l$-representations of $G$ and irreducible $\bar{E}$-representations of $G$, as $G$ has
order $q(q+1)(q-1)^2$ which is coprime to $l$.

If $q = 1 \bmod l$, then the distinct irreducible representations of $GL_2(k_F)$ over $\bar{\FF}$
are $\chi \circ \det$ and $\St \tensor (\chi\circ\det)$ for $\chi : k_F^\times \rarrow
\bar{\FF}^\times$, $\mu(\chi_1,\chi_2)$ for $\chi_1, \chi_2 : k_F^\times \rarrow \bar{\FF}^\times$ a
pair of distinct characters, and $\pi_{\theta}$ for $\theta : k_L^\times \rarrow \bar{\FF}^\times$
character which is not isomorphic to its conjugate.  The notation is all entirely analagous to the
characteristic zero case.  Once again, the only isomorphisms are $\mu(\chi_1,\chi_2) \cong
\mu(\chi_2,\chi_1)$ and $\pi_\theta \cong \pi_{\theta^c}$.  The reductions of the characteristic
zero representations are:
\begin{itemize}
\item $\bar{\chi\circ\det} = \bar{\chi}\circ \det$;
\item $\bar{\St \otimes \chi\circ\det} = \St \otimes (\bar{\chi} \circ \det)$;
\item $\bar{\mu(\chi_1,\chi_2)} = \mu(\bar{\chi}_1,\bar{\chi}_2)$ if $\bar{\chi}_1 \neq \bar{\chi}_2$;
\item $\bar{\mu(\chi_1,\chi_2)} = (\bar{\chi}\circ \det) \oplus \St \otimes (\bar{\chi}\circ \det)$
  if $\bar{\chi_1}  = \bar{\chi_2} = \bar{\chi}$;
\item $\bar{\pi}_{\theta} = \pi_{\bar{\theta}}$. 
\end{itemize}
For the last of these, we must observe that $\theta/\theta^c$ is a character of
$k_L^\times/k_F^\times$, a group which has order $q+1$ and so coprime to $l$ (as $l>2$).  Therefore
if $\theta \neq \theta^c$ then $\bar{\theta} \neq \bar{\theta}^c$.

If $q\equiv -1 \bmod l$, then the distinct irreducible representations are: $\chi \circ \det$ for
$\chi : k_F^\times \rarrow \bar{\FF}^\times$, $\mu(\chi_1,\chi_2)$ for $\chi_1, \chi_2 : k_F^\times
\rarrow \bar{\FF}^\times$ unordered pair of distinct characters, $\pi_\theta$ for $\theta :
k_L^\times \rarrow \bar{\FF}^\times$ a character which is not isomorphic to its conjugate, and
$(\chi\circ \det) \otimes \pi_1$ for $\chi : k_F^\times \rarrow \bar{\FF}^\times$ a character.  This
last needs some explanation: $\pi_1$ is the reduction modulo $l$ of $\pi_\theta$ for any character
$\theta : k_L^\times / k_F^\times \rarrow \bar{E}^\times$ which is not equal to $\theta^c$ but whose reduction modulo $l$ is
trivial.  Once again, the only isomorphisms are $\mu(\chi_1,\chi_2) \cong
\mu(\chi_2,\chi_1)$ and $\pi_\theta \cong \pi_{\theta^c}$.  The reductions of the characteristic 0 representations are:
\begin{itemize}
\item $\bar{\chi\circ\det} = \bar{\chi}\circ \det$;
\item $\bar{\mu(\chi_1,\chi_2)} = \mu(\bar{\chi}_1,\bar{\chi}_2)$;
\item $\bar{\pi}_\theta = \pi_{\bar{\theta}}$ if $\bar{\theta} \neq \bar{\theta}^c$;
\item $\bar{\pi}_\theta = \pi_1 \otimes (\bar{\theta}|_{k_F^\times} \circ \det)$ if $\bar{\theta} = \bar{\theta}^c$;
\item $\bar{\St \otimes (\chi \circ \det)}$ has $\pi_1 \otimes (\bar{\chi}\circ \det)$ as a submodule with
  quotient $\bar{\chi} \circ \det$.
\end{itemize}

In particular, comparing this analysis with lemma \ref{lem:type-lifts} shows that:
\begin{lemma}\label{lem:reduction-tame-types}
  If $\tau = (r,0)$ and $\tau'=(r',0)$ are scalar on $P_F$ but not on $\tilde{P}_F$, then $\bar{\sigma(\tau)}$ and
  $\bar{\sigma(\tau')}$ are irreducible and are isomorphic if and only if $r \equiv
  r'\bmod l$.
\end{lemma}

\subsection{The wild case.}
If $\tau = (r,0)$ and all twists of $r$ are wildly ramified (we say that $\tau$ is `essentially wildly ramified'),
then the following lemma will allow us to show that $\bar{\sigma(\tau)}$ is irreducible.  If $\rho$
is a $\bar{\ZZ}_l$-representation of a group $H$, we write $\rhobar$ for $\rho \tensor \bar{\FF}_l$.

\begin{lemma} \label{lem:irreducibility-lemma} Suppose that $H \vartriangleleft J \subset K$ are
  profinite groups such that $H$ is open in $K$, $H$ has pro-order coprime to $l$, and $J/H$ is
  an abelian $l$-group.  Suppose that $\lambda$ is a $\bar{\ZZ}_l$-representation of $J$, and write
  $\eta$ for the restriction of $\lambda$ to $H$.  Suppose that $\eta$ (and hence $\lambda$) is
  irreducible.  Suppose that if $g \in K$ intertwines $\eta$, then $g\in J$.  Then
  \begin{enumerate}
  \item The representations of $J$ extending $\eta$ are precisely $\lambda_i = \lambda \tensor
    \nu_i$ as $\nu_i$ run through the characters of $J/H$. There is an isomorphism $\Ind_H^J \eta
    \otimes \bar{E} \cong \bigoplus_{i} \lambda_i$.  The unique $\bar{\FF}_l$-representation
    extending $\bar{\eta}$ is $\bar{\lambda}$, and all of the Jordan--H\"{o}lder factors of
    $\Ind_H^J \bar{\eta}$ are isomorphic to $\bar{\lambda}$.
  \item A $\bar{\FF}_l$-representation $\rho$ of $J$ contains $\bar{\lambda}$ as a subrepresentation
    if and only if it contains $\bar{\lambda}$ as a quotient.
  \item The representations $\Ind_J^K\lambda_i$ and $\Ind_J^K\bar{\lambda}$ are irreducible.
    \end{enumerate}
\end{lemma}
\begin{proof}
  \begin{enumerate}
  \item In characteristic 0 we argue as follows.  First note that the representations $\lambda_i$
    are distinct, otherwise $\lambda|_H$ would have a non-scalar endomorphism, contradicting
    Schur's lemma. By Frobenius reciprocity, the $\lambda_i$ are distinct irreducible constituents
    of $\Ind_H^J\eta$. Since the sum of their dimensions is $\dim\Ind_H^J\eta$, they are the only
    irreducible constituents.  By Frobenius reciprocity, any representation extending $\eta$ must
    occur in $\Ind_H^J\eta$ and so must be one of the $\lambda_i$, as required.  In characteristic
    $l$, first note that $\bar{\lambda}$ is irreducible since the pro-order of $H$ is coprime to
    $l$.  It follows from this and the fact that $\bar{\nu}_i$ is trivial for all $i$ that the
    Jordan--H\"{o}lder factors of $\Ind_H^J\bar{\eta}$ are isomorphic to $\bar{\lambda}$.  Frobenius
    reciprocity then implies that $\bar{\lambda}$ is the unique irreducible representation of $J$
    extending $H$.
  \item It follows from part 1 that $\Hom_J(\bar{\lambda},\rho)\neq 0$ if and only if
    $\Hom_J(\Ind_H^J\bar{\eta},\rho)\neq 0$.  By Frobenius reciprocity, this is equivalent to
    $\Hom_H(\bar{\eta},\rho)\neq 0$.  But by the assumption on the pro-order of $H$, $\bar{\FF}_l
    $-representations of $H$ are semisimple, and so this is equivalent to
    $\Hom_H(\rho,\bar{\eta})\neq 0$, which by the same argument is equivalent to
    $\Hom_J(\rho, \Ind_H^J\bar{\eta})\neq 0$.
  \item First, note that $\dim \Hom_K(\Ind_J^K\bar{\lambda},\Ind_J^K\bar{\lambda}) = 1$, by Mackey's
    decomposition formula and the assumption that elements of $K \setminus J$ do not intertwine
    $\eta$.  Now suppose that $\rho$ is an irreducible subrepresentation of $\Ind_J^K\bar{\lambda}$.
    By Frobenius reciprocity and part 2 we may deduce that $\rho$ is also an irreducible quotient of
    $\Ind_J^K\bar{\lambda}$.  The composition \[\Ind_J^K\bar{\lambda} \onto \rho \into
    \Ind_J^K\bar{\lambda}\] is then a non-zero element of
    $\Hom_K(\Ind_J^K\bar{\lambda},\Ind_J^K\bar{\lambda})$, and is therefore scalar.  But this is
    only possible if $\rho = \Ind_J^K\bar{\lambda}$, as required.  The statement about
    $\Ind_J^K\lambda_i$ follows. \qedhere
    \end{enumerate}
\end{proof}

\begin{proposition} \label{prop:inducing_subgroups} Let $\tau = (r,0)$ be an essentially wildly
  ramified inertial type.  Then there exists a subgroup $J\subset K$, an irreducible representation
  $\lambda$ of $J$, and a subgroup $\tilde{J}\vartriangleleft J$, such that
  $(\tilde{J},J,K,\lambda)$ satisfy the hypotheses on $(H,J,K,\lambda)$ in lemma
  \ref{lem:irreducibility-lemma} and such that $\sigma(\tau) = \Ind_J^K\lambda$.
  
  In particular, $\bar{\sigma(\tau)}$ is irreducible.
\end{proposition}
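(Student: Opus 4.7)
The inertial types $\tau$ having no tamely ramified twist fall into two disjoint families, to be handled separately: (I) essentially wild principal series types, which up to twist take the form $\tau = \mathbbm{1} \oplus \rec(\epsilon)|_{I_F}$ for a character $\epsilon$ of $F^\times$ of exponent $N \geq 2$; and (II) essentially wild (positive-depth) cuspidal types $\tau = \rec(\pi)|_{I_F}$. In each case the plan is to read $J$ and $\lambda$ directly off Henniart's recipe for $\sigma(\tau)$, then to construct $\tilde{J}$ as the preimage in $J$ of the prime-to-$l$ part of a suitable finite cyclic quotient of $J$.

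In Case (I), take $J := K_0(N)$ and $\lambda := \epsilon$, so that $\sigma(\tau) = \Ind_J^K \lambda$ by construction. The image $\epsilon(J) \subset \OO^\times$ is finite cyclic; write it as $A_l \times A_{l'}$ with $A_l$ its $l$-primary part, and set $\tilde{J} := \epsilon^{-1}(A_{l'})$. Then $J/\tilde{J} \cong A_l$ is a cyclic $l$-group, $\lambda|_{\tilde{J}}$ is a character (hence automatically irreducible), and $\tilde{J}$ has pro-order coprime to $l$, since it is an extension of the prime-to-$l$ group $A_{l'}$ by the pro-$p$ group $K(N) \cap J$. Because $l \neq p$, the prime-to-$l$ part of $\epsilon$ still captures the entire pro-$p$ image of $\epsilon$, so $\epsilon|_{\tilde{J}}$ remains non-trivial on $1 + \pp_F^{N-1}$; a standard Bruhat decomposition computation then forces any $g \in K$ intertwining $\epsilon|_{\tilde{J}}$ to lie in $K_0(N)$.

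In Case (II), the Bushnell--Henniart theory (\cite{BushnellHenniart2006_GL2LocalLanglands}, Chapter 15) attaches to $\pi$ a simple stratum with associated open subgroups $H^1 \triangleleft J^1 \triangleleft J_{\mathrm{BH}} \subset G$, where $H^1$ is pro-$p$, $J^1/H^1$ is a finite abelian $p$-group, and $J_{\mathrm{BH}}/J^1$ has finite cyclic torsion part (isomorphic to $k_E^\times$ for $E = F[\beta]$), together with an irreducible representation $\Lambda$ of $J_{\mathrm{BH}}$ whose restriction to $J^1$ is the Heisenberg representation $\eta$ of a simple character $\theta$ of $H^1$, with $\pi$ compactly induced from $\Lambda$. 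After conjugating so that $J := J_{\mathrm{BH}} \cap K$ is the maximal compact subgroup of $J_{\mathrm{BH}}$, Henniart's recipe gives $\sigma(\tau) = \Ind_J^K \lambda$ with $\lambda := \Lambda|_J$, and $J/J^1 \cong k_E^\times$ is finite cyclic. Decomposing $J/J^1 = B_l \times B_{l'}$ and taking $\tilde{J}$ to be the preimage of $B_{l'}$ yields $J/\tilde{J} \cong B_l$ an abelian $l$-group, $\tilde{J}$ of pro-order coprime to $l$ (as an extension of $B_{l'}$ by the pro-$p$ group $J^1$), and $\lambda|_{\tilde{J}}$ irreducible, since $\lambda|_{\tilde{J}}$ is an extension of the irreducible Heisenberg representation $\eta$ of the same dimension. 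The intertwining condition reduces to the intertwining-implies-conjugacy theorem for simple characters: the $G$-intertwining of $\theta$ is exactly $J^1 E^\times$, and this propagates through the Heisenberg construction to the required statement for $\lambda|_{\tilde{J}}$ in $K$.

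\textbf{Main obstacle.} The substantive technical input is the intertwining condition in Case (II), which rests on the intertwining-implies-conjugacy theorem for simple characters---the deepest part of the Bushnell--Henniart type-theoretic machinery---and must then be propagated through the standard transitivity arguments for Heisenberg representations to the larger group $\tilde{J}$. The Case (I) intertwining statement is elementary in comparison, but in both cases one must be careful to separate cleanly the $l$-primary and prime-to-$l$ parts of the relevant finite abelian quotients in order to ensure that $J/\tilde{J}$ is genuinely an abelian $l$-group, rather than merely a cyclic group containing $l$-torsion.
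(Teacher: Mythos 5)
Your overall strategy is the same as the paper's: read $J$ and $\lambda$ off Henniart's construction of $\sigma(\tau)$ in the two cases (split/principal-series and cuspidal), and take $\tilde{J}$ to be a normal subgroup of prime-to-$l$ pro-order with $J/\tilde{J}$ an abelian $l$-group. Your Case (II) matches the paper's argument, including the appeal to the intertwining of the simple character (the paper cites Bushnell--Henniart 15.6 Proposition 2). However, your Case (I) has a genuine error in the choice of $\tilde{J}$. You set $\tilde{J} = \epsilon^{-1}(A_{l'})$, where $\epsilon$ is viewed as the character $\twomat{a}{b}{c}{d} \mapsto \epsilon(a)$ of $J = K_0(N)$, and you justify the coprimality of the pro-order of $\tilde J$ to $l$ by claiming $\tilde{J}$ is an extension of $A_{l'}$ by the pro-$p$ group $K(N)$. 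That is false: the kernel of $\epsilon|_J$ is much larger than $K(N)$ --- it contains, for instance, all matrices $\twomat{1}{0}{0}{d}$ with $d \in \OO_F^\times$, whose images in $k_F^\times$ have order $q-1$. Hence whenever $l \mid q-1$ (one of the three central cases of the paper), your $\tilde{J}$ has pro-order divisible by $l$ and the hypotheses of lemma \ref{lem:irreducibility-lemma} fail. The fix is the paper's choice: define $\tilde{J}$ by requiring \emph{both} diagonal entries $a$ and $d$ to have order coprime to $l$ modulo $\pp_F$, so that $J/\tilde{J}$ is a quotient of the $l$-part of $k_F^\times \times k_F^\times$ and $\tilde{J}$ really is an extension of a prime-to-$l$ group by a pro-$p$ group.

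Secondarily, your verification of the intertwining hypothesis in Case (I) is only a gesture ("a standard Bruhat decomposition computation"). Note that the condition to be checked concerns intertwining of $\epsilon|_{\tilde{J}}$, which is a priori weaker (easier to satisfy) than intertwining of $\epsilon$ on all of $K_0(N)$, so the standard computation for $K_0(N)$ does not apply verbatim. The paper handles this with a clean counting argument: by Mackey, $\dim\Hom_K(\Ind_{\tilde{J}}^K\epsilon, \Ind_{\tilde{J}}^K\epsilon)$ equals the number of double cosets supporting an intertwining operator; since $\Ind_{\tilde{J}}^K\epsilon = \bigoplus_i \Ind_J^K\epsilon_i$ with each summand irreducible by Henniart's results, this dimension equals $(J:\tilde{J})$, which is exactly accounted for by the cosets $g \in J/\tilde{J}$, leaving no room for intertwining elements outside $J$. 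You should either reproduce that argument or carry out the Bruhat computation explicitly for the corrected $\tilde{J}$.
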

\begin{proof} Suppose first that $r$ is the restriction to $I_F$ of a reducible representation of
  $G_F$.  Then $\sigma(\tau) = \Ind_{K_0(N)}^K\epsilon \tensor (\chi\circ \det)$ for a character
  $\epsilon$ of $\Oc_F^\times$ of exponent $N\geq 2$ and a character $\chi$ of $\Oc_F^\times$.  Let
  $J=K_0(N)$, and let \[\tilde{J} = \left\lbrace \twomat{a}{b}{c}{d} \in J \; : \; a\mbox{ has
      order coprime to $l$ modulo $\pf_F$}\right\rbrace.\] Then $\tilde{J}$, $J$ and $\epsilon$
  satisfy all the required hypotheses --- the only one to check is that $\epsilon |_{\tilde{J}}$ is
  not intertwined by any element of $K\setminus J$.  We deduce this (in somewhat circular fashion)
  from the irreducibility of $\Ind_J^K(\epsilon)$, since this is shorter than a direct proof.  If
  $g\in K$ intertwines $\epsilon|_{\tilde{J}}$, then $\Hom_{\tilde{J}\cap
    g\tilde{J}g^{-1}}(\epsilon,\epsilon^{g})\neq 0$.  By Mackey's formula,
  \[\dim\Hom_{\tilde{J}}(\epsilon, \Ind_{\tilde{J}}^K\epsilon) = \sum_{g\in \tilde{J}\setminus K /
    \tilde{J}} \dim \Hom_{\tilde{J}\cap g\tilde{J}g^{-1}}(\epsilon,\epsilon^g).\] The left hand side
  is in turn equal to $\dim\Hom_K(\Ind_{\tilde{J}}^K\epsilon, \Ind_{\tilde{J}}^K\epsilon)$.  But
  $\Ind_{\tilde{J}}^K\epsilon = \bigoplus_i\Ind_J^K\epsilon_i$ where $\epsilon_i$ are the characters
  of $J$ extending $\epsilon |_{\tilde{J}}$, and by the appendix to \cite{breuil2002}, these
  $\Ind_J^K\epsilon_i$ are irreducible and distinct.  Therefore the left hand side is equal to
  $(J:\tilde{J})$.  The right hand side has a contribution of 1 from each $g\in J/\tilde{J}$, and
  therefore from no other $g$, as required.

  Now suppose that $r$ is the restriction to $I_F$ of an irreducible representation of $G_F$.  Then
  $\sigma(\tau) = \Ind_{J}^K\lambda$ for an irreducible representation $\lambda$ of $J$ extending an
  irreducible representation $\eta$ of a pro-$p$ normal subgroup $J^1$ of $J$ (see
  \cite{BushnellHenniart2006_GL2LocalLanglands}, sections 15.5, 15.6 and 15.7 --- note that our $J$
  is the maximal compact subgroup of their $J_\alpha$, but our $J^1$ agrees with their $J_\alpha^1$).
  We have $J/J^1 = k^\times$, where $k$ is the residue field of a quadratic extension of $F$, and so
  $J$ has a normal subgroup $\tilde{J}$ of pro-order coprime to $l$ such that $J/\tilde{J}$ is an
  $l$-group.  Then $(\tilde{J},J,K,\lambda)$ satisfy all the required hypotheses --- the
  intertwining statement follows from \cite{BushnellHenniart2006_GL2LocalLanglands}, 15.6
  Proposition 2.
\end{proof}
\begin{proposition} \label{prop:type_congruence_implies_K-type_congruence} 
  Let $\tau = (r,0)$ and $\tau' = (r',0)$ be inertial types that are not scalar on $\tilde{P}_F$.  If
  $r \equiv r' \bmod l$, then $\bar{\sigma(\tau)}$ and $\bar{\sigma(\tau')}$ are isomorphic.
\end{proposition}
\begin{proof}
  If either of $r$ and $r'$ is (after to a twist) tamely ramified, then so is the other and this
  is contained in lemma \ref{lem:reduction-tame-types}.  Otherwise, by lemma \ref{lem:type-lifts},
  we are in one of the following cases:
  \begin{enumerate}
  
  \item $r = (\chi_1 \oplus \chi_2)|_{I_F}$ for characters $\chi_1$ and $\chi_2$ of $G_F$ that
    are distinct on $P_F$, and $r' = (\chi'_1 \oplus \chi'_2)|_{I_F}$ for characters $\chi'_1$
    and $\chi'_2$ of $G_F$ with $\chi_i \equiv \chi'_i$ for $i=1,2$.
  \item $r = (\Ind_{G_L}^{G_F} \xi)_{I_F}$ and $r'=(\Ind_{G_L}^{G_F}\xi')_{I_F}$ for
    wildly ramified characters $\xi$ and $\xi'$ of $G_L$ such that $\xi \equiv \xi'$,
    and such that $\xi|_{\tilde{P}_F}$ does not extend to $G_F$.
  \item $r|_{\tilde{P_F}}$ is irreducible and $r' = r \otimes \chi$ for a character $\chi$
    of $I_F$ that extends to $G_F$ and such that $\chi \equiv 1$ mod $l$.
  \end{enumerate}

  In the first case, we may write $\chi_i = \rec(\epsilon_i)$ and $\chi'_i =
  \rec(\epsilon'_i)$ with $\epsilon_i$ and $\epsilon'_i$ characters of $F^\times$ such
  that $\epsilon_i \equiv \epsilon'_i \bmod l$ and such that $\epsilon = \epsilon_1/\epsilon_2$ has
  exponent $N \geq 1$.  Since $\epsilon'=\epsilon'_1/\epsilon'_2$ also has exponent $N$, we have
  \begin{align*}
    \sigma(\tau) &= \epsilon_2 \tensor \Ind_{K_0(N)}^K \epsilon \\
    &\equiv \epsilon'_2 \tensor \Ind_{K_0(N)}^K \epsilon' \mod l  \\
    & = \sigma(\tau').\\
  \end{align*}

  In the second case, by twisting we may reduce to the case where $(L/F,\rec^{-1}(\xi))$ is an
  unramified minimal admissible pair (\cite{BushnellHenniart2006_GL2LocalLanglands} paragraph 19.6).
  Then, following through the explicit construction of \cite{BushnellHenniart2006_GL2LocalLanglands}
  paragraphs 19.3 and 19.4, we see that there are:
  \begin{enumerate}
  \item a simple stratum $(\Af,n,\alpha)$ with associated compact open subgroups $J_1 \subset J \subset K$,
    with $J_1$ pro-$p$ and $J/J_1 \cong k_L^\times$;
  \item a representation $\eta$ of $J^1$ and extensions $\lambda$ and
    $\lambda'$ of $\eta$ to $J$ such that $\Ind_J^K(\lambda) = \sigma(\tau)$ and $\Ind_J^K(\lambda') = \sigma(\tau')$.
  \end{enumerate}
  Indeed, up to conjugacy $(\Af,n,\alpha)$, $J_1$ and $\eta$ are determined by
  $\rec^{-1}(\xi)|_{U^1_L}=\rec^{-1}(\xi')|_{U^1_L}$.  The representations $\lambda$ and $\lambda'$
  are defined in terms of $\rec^{-1}(\xi)$ and $\rec^{-1}(\xi')$ by the formulae of
  \cite{BushnellHenniart2006_GL2LocalLanglands} 19.3.1 and corollary 19.4 (together with the
  correction factor of paragraph 34.4, an unramified twist $\Delta_\xi$, that makes no difference to
  the argument).  It is clear from these that if $\xi \equiv \xi'$ then $\lambda \equiv \lambda'$ as
  required.
  
  In the final case, $r' = r \otimes \chi$ for a character $\chi$ of $I_F$ that extends to $G_F$.
  By compatibility of $\tau \mapsto
  \sigma(\tau)$ with twisting,
  \begin{align*}
    \sigma(\tau') &= \sigma(\tau) \otimes \rec^{-1}(\chi)\circ \det\\
    & \equiv \sigma(\tau) \mod l
  \end{align*} as required.
  \end{proof}
\begin{proposition} \label{prop:K-type_congruence_implies_type_congruence} 
  Let $\tau = (r,0)$ and $\tau' = (r',0)$ be inertial types that are not scalar on $\tilde{P}_F$.  If
  $\bar{\sigma(\tau)}$ and $\bar{\sigma(\tau')}$ are isomorphic, then $r \equiv r' \mod l$.
\end{proposition} 
\begin{proof}
  If one of $r$ and $r'$ has a twist which is trivial on $P_F$, then so does the other and in this
  case the proposition follows from \ref{lem:reduction-tame-types}.

  Otherwise may, by twisting, assume that $\sigma(\tau)$ and $\sigma(\tau')$ satisfy $l(\sigma) \leq
  l(\sigma \otimes \chi$) for all characters $\chi$ of $\Oc_F^\times$ (the definition of $l(\sigma)$
  is as in \cite{BushnellHenniart2006_GL2LocalLanglands} paragraph 12.6).  In this case
  $\sigma(\tau)$ and $\sigma(\tau')$ contain the same, non-empty, sets of fundamental strata
  (because this only depends on the restriction to pro-$p$ subgroups).

  If one of $\sigma(\tau)$ and $\sigma(\tau')$ contains a split fundamental stratum
  (\cite{BushnellHenniart2006_GL2LocalLanglands} 13.2) then so does the other.  In this case,
  \cite{BushnellHenniart2006_GL2LocalLanglands} corollary 13.3 implies that they cannot be cuspidal
  types and so we must have $\sigma(\tau) = \Ind_{K_0(N)}^K(\epsilon)$ and $\sigma(\tau') =
  \Ind_{K_0(N')}^K(\epsilon')$ for some $\epsilon$ and $\epsilon'$ of exponents $N$ and $N'$.  It is
  easy to see that in fact we must have $N = N'$.  From lemma \ref{lem:irreducibility-lemma} we
  deduce that $\epsilon \equiv \epsilon' \mod l$, and so $\tau \equiv \tau' \mod l$ as required.
  
  Otherwise, $\sigma(\tau) = \Ind_J^K \lambda$ and $\sigma(\tau') = \Ind_J^K \lambda'$ for a simple
  stratum $(\Af,n,\alpha)$ with associated groups $J^1 \subset J$ and representations $\lambda$ and
  $\lambda'$ extending the representation $\eta$ of $J$.  From lemma \ref{lem:irreducibility-lemma}
  we deduce that $\lambda' = \lambda \otimes \eta$ for a character $\eta$ of $J/J^1$ with $\eta
  \equiv 1 \mod l$.

  If $\Af$ is unramified, then by the reverse of the argument in the second case of the previous
  proposition we see that $\tau = (\Ind_{G_L}^{G_F} \xi)|_{I_F}$ and $\tau = (\Ind_{G_L}^{G_F}
  \xi)|_{I_F}$ for $\xi$ and $\xi'$ characters of $G_L$ with $\xi|_{I_L} \equiv \xi'|_{I_L}$, whence
  the result.  
  
  If $\Af$ is ramified, then $\eta$ can be regarded as a character of $J/J^1 \cong k_M^\times =
  k_F^\times$ with $\eta \equiv 1 \mod l$ for some \emph{ramified} quadratic extension $M/F$.  I
  claim that there is a character $\chi$ of $\Oc_F^\times$ with $\eta = \chi \circ \det$ and $\chi
  \equiv 1 \mod l$.  Indeed, as $l > 2$ we can take the inflation to $\Oc_F^\times$ of the character
  $\chi$ of $k_F^\times$ satisfying $\chi \equiv 1 \mod l$ and $\chi^2 = \eta$.  Then $\sigma(\tau)
  = \sigma(\tau') \otimes (\chi \circ \det)$ and so \begin{align*}\tau &= \tau' \otimes \rec(\chi)
    \\
    &\equiv \tau' \mod l\end{align*} as required.
\end{proof}

\section{Erratum}  
\label{sec:erratum}
The proof of Proposition~\ref{prop:minors} is not correct; however, the proposition is true and the results of the paper
are unaffected.  There is a related gap in the proof of Proposition~\ref{q-1}, which we also fill. I am very grateful to
Lue Pan for pointing out the error.

The problem is that $\Proj(S/(X_1 - \alpha_1Y_1, \ldots, X_i - \alpha_i Y_i))$ being reduced doesn't imply that $S/(X_1 -
\alpha_1Y_1, \ldots, X_i - \alpha_i Y_i)$ is reduced --- there may be nilpotent elements annihilated by the `irrelevant
ideal' generated by positively graded elements.

However, the given reference (\cite{Eisenbud1995-CommutativeAlgebra} Theorem~18.18) certainly implies that $S/I$ is
Cohen--Macaulay; it follows that the claimed sequence is in fact a regular sequence, and the characterisation of when
$S/I$ is Gorenstein follows as in the given proof.

Alternatively we can use an argument that I learned from the MathOverflow posts \cite{127053} and \cite{235179}.  It is
well-known that $R$ is the homogeneous coordinate ring of the image $X$ of the Segre embedding of
$s : \PP^1 \times \PP^{n-1} \rarrow \PP^{2n -1}$.  Then $R$ is Cohen--Macaulay if and only if
$H^i(\PP^{2n-1}, \Ic_X(r)) = 0$ for all $0 < i < n$ and all $r \in \ZZ$, and $R$ is Gorenstein if, in addition,
$\omega_X \cong \Oc_{\PP^{2n-1}}(r)|_X$ for some $r \in \ZZ$ (see \cite[pp9-11, proposition
4.1.1]{migliore1998introduction}).  From the exact sequence
\[0 \rarrow \Ic_X \rarrow \Oc_{\PP^{2n-1}} \rarrow \Oc_X,\]
the equation $s^*\Oc_{\PP^{2n-1}}(1) \cong \Oc_{\PP^{1}}(1) \boxtimes \Oc_{\PP^{n-1}}(1)$ and the K\"{u}nneth formula we
see that $R$ is Cohen--Macaulay.  Since $\omega_X \cong \Oc_{\PP^1}(-2) \boxtimes \Oc_{\PP^{n-1}}(-n)$, we see that $R$
is Gorenstein if and only if $n = 2$.

A similar issue affects the proof of Proposition~\ref{q-1}, in the sentence ``It is therefore sufficient to check that
$\Proj(\gr(R'\otimes \FF))$ is reduced and irreducible.''.  It is not.  However, the given argument shows that
$\gr(R' \otimes \FF)$ has a unique minimal prime ideal and that any nilpotent elements are supported at the irrelevant
ideal.  But we know that this ring is Cohen--Macaulay and so has no embedded associated primes.  It follows that
$\gr(R' \otimes \FF)$ is reduced with a unique minimal prime ideal, and is therefore a domain as claimed.

\bibliography{references,mo}{} \bibliographystyle{amsalpha}
\end{document}